\documentclass[a4paper,3pt]{article}

\usepackage{amsmath,amssymb,amsfonts,amsthm}
\usepackage{color}
\usepackage{pdfpages}
\usepackage{graphicx}
\usepackage{geometry}
\usepackage{url}
\usepackage{enumerate}
\usepackage{extarrows}
\usepackage{extarrows,booktabs,multirow,diagbox}

\newtheorem{thm}{Theorem}
\newtheorem{remark}{Remark}
\newtheorem{lem}{Lemma}[section]

\theoremstyle{definition}
\newtheorem{defn}{Definition}

\setlength{\oddsidemargin}{5mm}
\setlength{\evensidemargin}{5mm}
\setlength{\textwidth}{150mm}
\setlength{\textheight}{205mm}
\setlength{\headsep}{15mm}





\newcommand{\Rg}       {{\hbox{I\kern-.22em\hbox{R}}}}
\newcommand{\Pg}       {{\hbox{I\kern-.22em\hbox{P}}}}
\newcommand{\Eg}       {{\hbox{I\kern-.22em\hbox{E}}}}

%

%
%

\usepackage{color}
\newcommand{\red}{\color[rgb]{0.8,0,0}}

\title{Mixed Sub-fractional Brownian Motion and Drift Estimation of Related Ornstein-Uhlenbeck Process}

\author{
Chunhao Cai\\
School of Mathematics, Shanghai University of Finance and Economics, Shanghai, China. \\
Email: caichunhao@mail.shufe.edu.cn\\
\and 
*Qinghua Wang\\
Corresponding author\\
School of Mathematics, Shanghai University of Finance and Economics, Shanghai, China. \\
Email: wangqinghua@mail.shufe.edu.cn\\
\and 
Weilin Xiao\\
School of Management, Zhejiang University, Hangzhou, China. \\
Email: wlxiao@zju.edu.cn.}
\date{\today}

\begin{document}
\maketitle

\begin{abstract}
In this paper, we will first give the numerical simulation of the sub-fractional Brownian motion through the relation of fractional Brownian motion instead of its representation of random walk. In order to verify the rationality of this simulation, we propose a practical estimator associated with the LSE of the drift parameter of mixed sub-fractional Ornstein-Uhlenbeck process, and illustrate the asymptotical properties according to our method of simulation when the Hurst parameter $H>1/2$. \\

\vspace{2mm}

\textit{AMS Mathematics subject classification}: Primary 60G22, Secondary 62F10

\vspace{2mm}

\textbf{Key words}: Sub-fractional Brownian motion; Ornstein-Uhlenbeck process;  Least Square Estimator; Malliavin Calculus

\end{abstract}
\section{Introduction}
Temporal dependence in the volatility of financial assets has been one of the most interesting problems in financial economics. For example, Gatheral et \textit{a.l.} \cite{GJR18} introduced an important phenomenon in finance: the volatility is rough. Furthermore, Euch, Fukasawa and Rosenbaum (\cite{ER18}, \cite{ER19}, \cite{EFR18}) studied so many models under the rough volatility. In these works, they proved that the scaling limit of some nearly unstable Hawkes process is a rough model with the key formula $\int_0^t (t-s)^{H-1/2}dW_s$, where $W_s$ is a standard Brownian motion.

As we know, the sub-fractional Brownian motion (sfBm) arising from the occupation time fluctuations of branching particle systems with Poisson initial condition also presents the properties of long-range dependence and the rough dependence for different Hurst parameter. It will also be a potential candidate to model noise in mathematical finance (see \textit{e,g,} \cite{liu2010sub}) even if its  increment is not stationary.

However, as far as we know, there are only a few works concerning on the numerical simulation of the sfBm. We find the method of the random walk based on the convergence in distribution in \cite{MF15}, but sometimes this method is not so accurate. This brings us a nature idea,  can we find the simulation in the sense of strong convergence? We will find the answer in Section \ref{preliminaries} through the relations between sfBm and fBm. In order to verify the rationality of this simulation, we will propose a practical estimator associated with the LSE of the drift parameter of mixed sub-fractional Ornstein-Uhlenbeck process (msfOU for short), and utilize our numerical results to illustrate the asymptotical properties when the Hurst parameter $H>1/2$.

Why the  mixed sub-fractional Brownian motion (msfBm for short)? In fact, a pure sfBm is just an extension of the fractional Brownian motion and we have almost the same results without extra complicated calculations, but when we add the sfBm with an independent Brownian motion (so called msfBm), the properties change a lot such as the quadratic variation, the stochastic integral, the semi-martingale representation according to \cite{CCK}. In the non-ergodic Ornstein-Uhlenbeck case, these changes will be more clear because it means that we can not directly use the Young integral presented in \cite{el2016least}. Here we  want to find how these changes bring us the differences in finding the asymptotical properties of the LSE of the drift parameter of msfOU (the Lemma \ref{s s distance} for example and others).  

In order to achieve the mentioned goals, in this paper we will define the Malliavin derivative and its adjoint operator (or the Skorohod integral) with respect to the msfBm when $H>1/2$ and try to find out the relation between them and that with respect to the standard Brownian motion with the method of fundamental martingale in \cite{CCK}. At the same time we also find the LSE of the drift parameter of msfOU process through the Skorohod integral and demonstrate the asymptotical properties not only in the ergodic case but also in the non-ergodic case.
\begin{remark}
With the method of fundamental martingale, the msfBm can be presented by the stochastic integral $\int_0^t g(s,t)dW_s$. Similar to the rough model in \cite{ER18}, \cite{ER19}, \cite{EFR18}, we can try to find the conditions of one type unstable Hawkes process whose scaling limit is this process and it will be our future works.
\end{remark}

The rest of this paper is organized as follows. In Section \ref{preliminaries} we will introduce our method of simulation and the Skorohod integral as well as the path-wise integral with respect to the msfBm.  The exact formula of the practical estimator and its asymptotic properties will be discussed in Section \ref{sec: estimation}.  For the sake of completeness, the non-ergodic case of the LSE of the msfOU process is discussed in Section \ref{sec: non ergodic}. Section \ref{sec: simulation} is devoted to presenting Monte Carlo studies on the finite sample properties of the practical estimator with $1/2<H<3/4$. Some technical proofs are collected in the Appendix.

\section{Preliminaries}\label{preliminaries}

\subsection{sub-fractional Brownian motion and simulation}


Let  $(\Omega, \mathcal{F}, \mathcal{F}_t, \mathbf{P})$ be a filtered probability space, the sub-fractional Brownian motion $S^{H}=\{S^H_t, t \geq 0\}$  with an initial $S^{H}_0=0$ and the index $H\in(0,1)$ is a mean zero Gaussian process defined by the covariance function:
\begin{equation}\label{eq: cov St}
R_H(s,t)=\mathbf{E}(S_t^H S_s^H)=t^{2H}+s^{2H}-\frac{1}{2}\left(|t-s|^{2H}+|t+s|^{2H}\right),\, s,t\in [0,T].
\end{equation}
As we know, for $H = 1/2$, $S^{H}$ coincides with the standard Brownian motion. In fact, $S^{H}$ is
neither a semimartingale nor a Markov process for other $H$ and specially for  $H>1/2$, we can see that
$$\mathbf{E}S_t^HS_s^H=\int_0^t\int_0^sK_H(r,u)drdu,\, 0\leq s,t\leq T$$
with
\begin{equation}\label{K H}
K_H(s,t)=\frac{\partial ^2}{\partial s\partial t}R_H(s,t)=H(2H-1)\left(|s-t|^{2H-2}-(s+t)^{2H-2}\right).
\end{equation}

In \cite{zili13}, the author presented a very important relationship between sfBm and fBm, that is for any $H\in (0,1)$
\begin{equation}\label{eq: sum fBm}
S_t^H=\frac{1}{\sqrt{2}}(B_t^H+B_{-t}^H),\, H\in (0,1),\, 0\leq t \leq T
\end{equation}
where $B^H=(B_t^H)_{-\infty < t< \infty}$ is fBm with Hurst index $H$ on the whole real line. We make this equation \eqref{eq: sum fBm} as the idea of the simulation of sub-fractional Brownian motion using Paxson's algorithm (see, \cite{paxson1997}). The procedure follows:



\begin{itemize}

\item  Set the sampling size $N$ and the time span $T$ and obtain the sampling interval by $d=T/N$;

\item Set the values of two variables $H$ and generate fBm $B_{2T}^H$ based on Paxson's method (see \cite{paxson1997}), with the sampling interval $d$ and $2N$ points. Thus, we have
$0=t_0<t_1<t_2<\cdots<t_{2N}=T,\, t_i-t_{i-1}=d$;

\item  consider the sequence of $\bf{Y}$ with $2N$ dimension
$$Y_1=B_{t_1}^H, Y_2=B_{t_2}^H-B_{t_1}^H, \cdots, Y_i=B_{t_i}^H-B_{t_{i-1}}^H,\cdots, Y_{2N}=B_{t_2N}^H-B_{t_{2N-1}}^H;$$

\item  Construct a new real line fBm denoted $\tilde{B}^H$:
$$\tilde{B}_{t_i}^H=\sum_{j=1}^i Y_{N+j},\, i=1,2,\cdots,N,\:\;\;\tilde{B}_{-t_i}^H=-\sum_{j=1}^i Y_{N-(j-1)},\, i=1,2,\cdots,N;$$

\item   Use $S_{id}^H=\frac{1}{\sqrt{2}}(\tilde{B}_{t_i}^H+\tilde{B}_{-t_i}^H)$ to obtain sfBm;

\end{itemize}
The following two figures represent the sfBm with $H=0.7$ and $H=0.3$

\begin{center}
\resizebox{100mm}{60mm}{\includegraphics{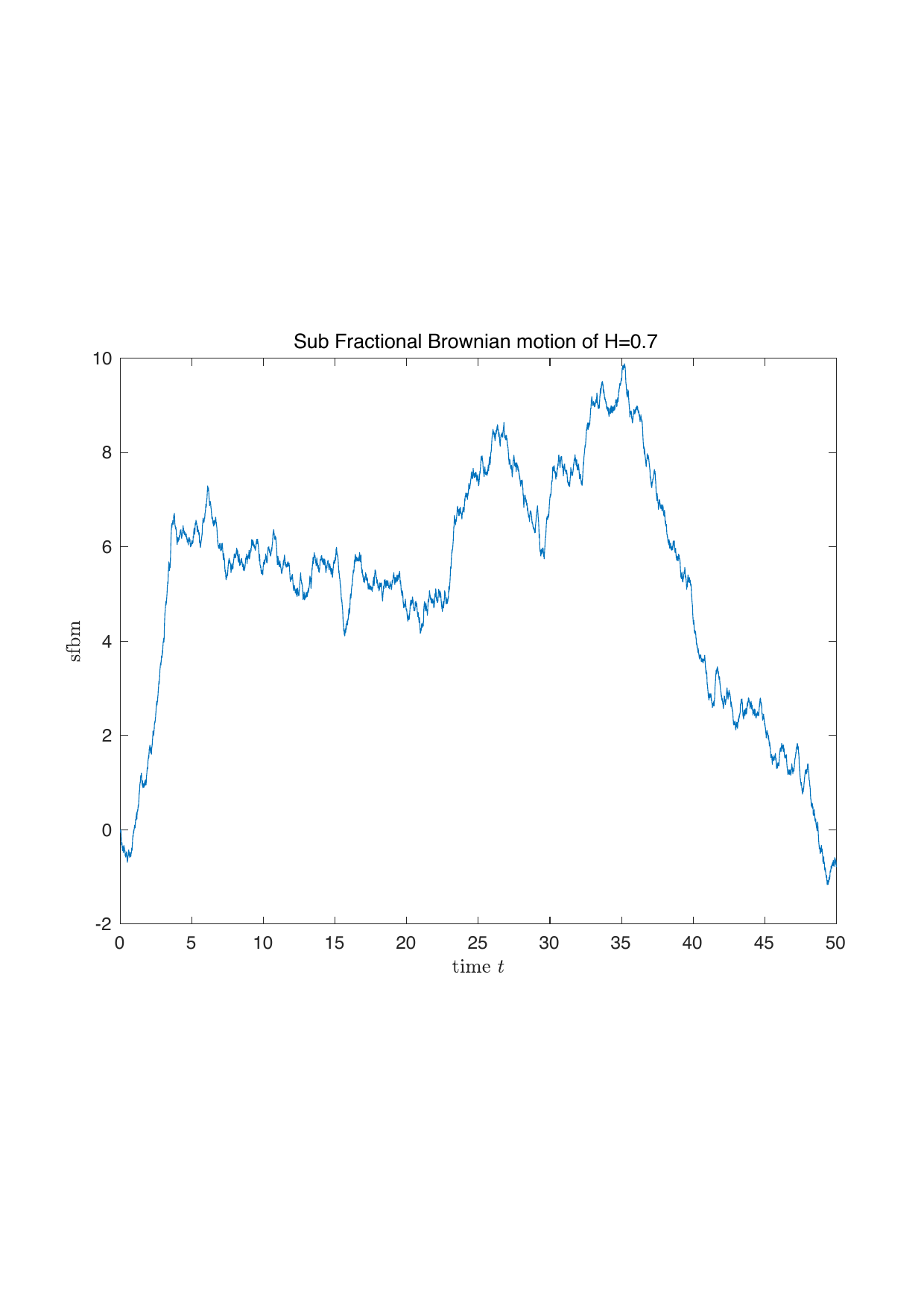}}\\
\resizebox{100mm}{60mm}{\includegraphics{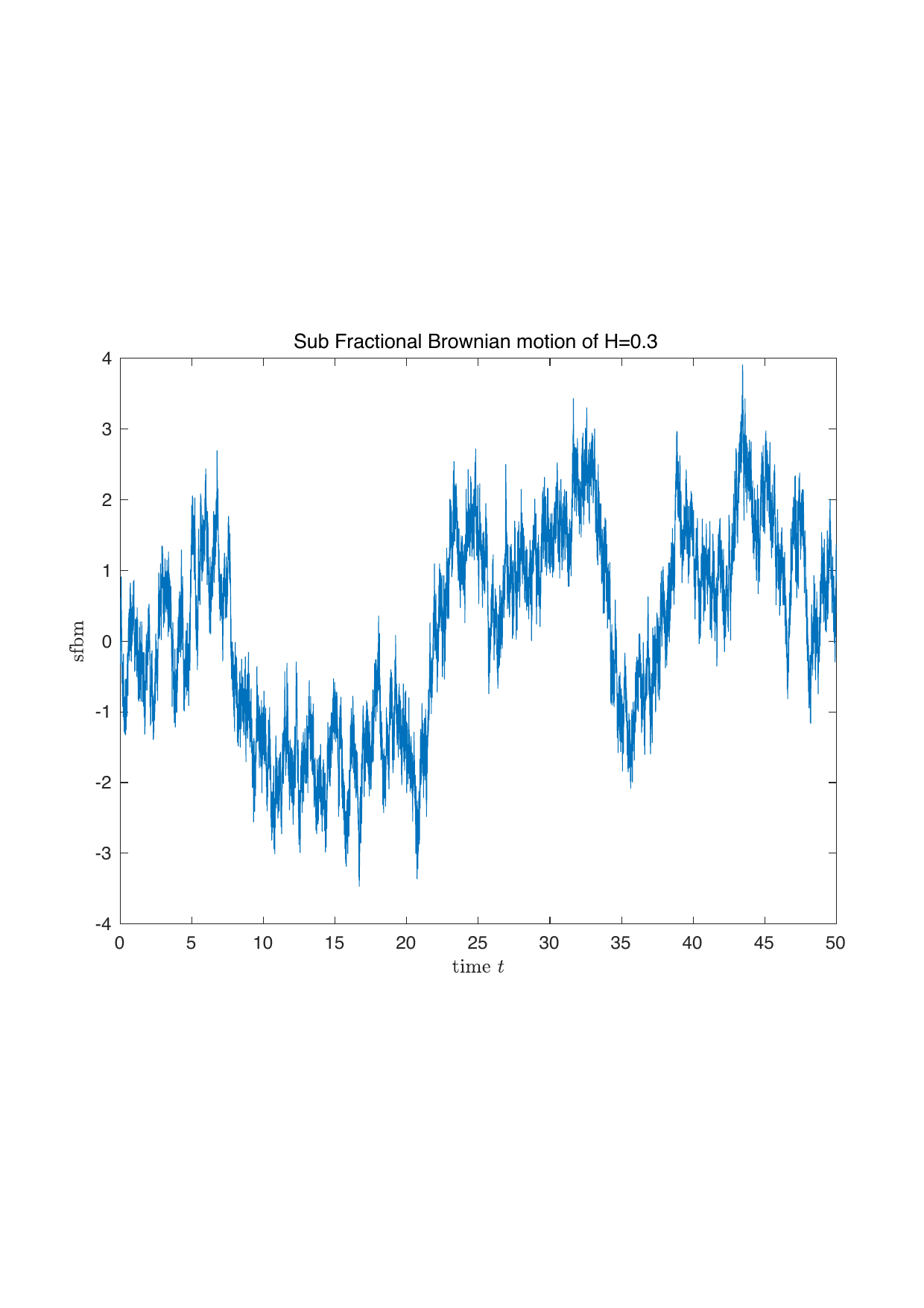}}\\
\end{center}

\subsection{Malliavin derivative and adjoint operator with respect to msfBm}

From now on, we only consider our model for $H>1/2$. Following the idea of \cite{CCK}, we introduce the process of mixed sub-fractional Brownian motion $\xi=(\xi_t,\,0\leq t\leq T)$ which is defined by
\begin{equation}\label{mixed def}
\xi_t=W_t+S_t^H,
\end{equation}
where $W=(W_t,\,0\leq t\leq T)$ is a standard Brownian motion and $S^H=(S_t^H,\, 0\leq t\leq T)$ is an independent sub-fractional Brownian motion. As we know, the stochastic integrals with respect to the standard Brownian motion and fractional Brownian motion are too different when the integrant is a stochastic process, so if we want to define the integral with the process $\xi$, we can not define two integral and then add them together. In this case we have to just consider $\xi$ as a Gaussian process and we use the Malliavin calculus to define this type stochastic integral.

We consider a fixed interval $[0,T]$ and denote $\mathcal{E}$ be the set of step function on $[0,T]$. Let $\mathcal{H}$ be the Hilbert space defined as the closure of $\mathcal{E}$ with respect to the scalar product
$$
\langle \mathbf{1}_{[0,t]},\, \mathbf{1}_{[0,s]}\rangle_{\mathcal{H}}=R_H(t,s)+t\wedge s
$$
where $R_H(t,s)$ defined in \eqref{eq: cov St}. As presented in Definition 1.1.1 of \cite{Nualart}, we can define an isometry from the Hilbert space $\mathcal{H}$ to the Gaussian space $\mathcal{H}_1$ associated with $\xi$ as the extension of the mapping $\mathbf{1}_{[0,t]}\rightarrow \xi_t$. This isometry will be presented by $\varphi\rightarrow \xi(\varphi)$ for every function $\varphi\in \mathcal{H}$. Now, for any pair step functions $f,g\in \mathcal{H}$ and $H>1/2$, we have
\begin{equation}\label{fg}
\langle f,\,g\rangle_{\mathcal{H}}=\int_0^T f(t)g(t)dt+\alpha_H\int_0^T\int_{0}^{T}f(t)g(s)\left(|t-s|^{2H-2}-(t+s)^{2H-2}\right)dsdt\,,
\end{equation}
where $\alpha_H=H(2H-1)$. Following the same steps as Section 1.2 in \cite{Nualart}, let  $F\in \mathcal{S}$  the class of smooth random variables with the formula
\begin{equation}\label{smooth random}
F=f(\xi(h_1),\,\dots,\, \xi(h_n)),
\end{equation}
where $f\in C_b^{\infty}(\mathbb{R}^n)$, $h_1,\,\dots,\, h_n \in\mathcal{H}$ for $n\geq 1$.  The Malliavin derivative $D_{\xi}F$  with respect to $F$ satisfying the chain rule,  is provided by the following definition.
\begin{defn}
The Malliavin derivative of $D_{\xi}F$ of the smooth random variable $F$ with the exact formula \eqref{smooth random} is a $\mathcal{H}-$valued random variable given by
\begin{equation}\label{mallivanderivative}
D_{\xi}F=\sum_{i=1}^n \frac{\partial f}{\partial x_i}\left(\xi\left(h_1\right),\,\dots,\, \xi\left(h_n\right)\right)h_{i}\,.
\end{equation}
\end{defn}
Now, let us define the space $\mathbb{D}_{\xi}^{1,2}$ which is the closure of the class of of smooth randoms variables $\mathcal{S}$ with the norm
$$
||F||_{1,2}=\left(\mathbf{E}(|F|^2)+\mathbf{E}||D_{\xi}F  ||_{\mathcal{H}}^2 \right)^{1/2},\, F\in \mathbb{D}_{\xi}^{1,2},
$$
then it is obvious that $\mathbb{D}_{\xi}^{1,2}$ is a Hilbert space,  we can define the adjoint of the operator $D_{\xi}$
\begin{defn}
Let $\delta_{\xi}$ be the adjoint of the operator $D_{\xi}$. Then $\delta_{\xi}$ is an unbounded operator on the space $L^2(\Omega;\mathcal{H})$ with values in $L^2(\Omega)$ such that
\begin{itemize}
\item  The domain of $\delta_{\xi}$, denoted by $\emph{Dom}(\delta_{\xi})$, is the set of $\mathcal{H}-$valued square integrable random variables $u\in L^2(\Omega;\mathcal{H})$ such that for any $F\in \mathbb{D}_{\xi}^{1,2}$,$$
|\mathbf{E}(\langle D_{\xi}F,\,u   \rangle_{\mathcal{H}})|\leq c||F||_{2}\,.
$$
where $c$ is a constant depending on $u$.

\item If $u$ belongs to $\emph{Dom}(\delta_{\xi})$, then $\delta_{\xi}(u)$ is the element of $L^2(\Omega)$ characterized by
$$
\mathbf{E}(F\delta_{\xi}(u))=\mathbf{E}(\langle D_{\xi}F,\,u \rangle_{\mathcal{H}})\,.
$$
\end{itemize}

In fact, if $u\in\emph{Dom}(\delta_{\xi})$, then the Skorohod integral with respect to $\xi$ is $\delta_{\xi}(u)$ and denoted by 
$$
\delta_{\xi}(u)=\int_0^Tu(t)\delta\xi_t\,.
$$
\end{defn}
\begin{remark}
For the deterministic function $\psi(t)\in \mathcal{H}$, it is not hard to check that $\psi(t)\in \emph{Dom}(\delta_{\xi})$, with the same proof as the stochastic calculus with respect to the standard Brownian motion in \cite{FHBZ}, since $\delta_{\xi}(\psi)$ is the Riemmann-Stieltjes intergral.
\end{remark}
If we introduce the process $B=(B_t,\, 0\leq t\leq T)$ be the standard Brownian motion with the same filtration of the process $\xi$, one may ask the following three questions:
\begin{enumerate}
\item  What is the relationship between the Malliavin derivative $D_{\xi}$ and $D_{B}$
\item  What is the relationship between the adjoint operator $\delta_{\xi}$ and $\delta_{B}$?
\item What is the operator of $\mathcal{G}^*$ such as the $K_H^*$ presented in Chapter 5 of \cite{Nualart}?
\end{enumerate}
To answer these questions, we first introduce the fundamental martingale of the msfBm which is also defined in \cite{CCK}. In fact, the fundamental martingale $M=(M_t,\, 0\leq t\leq T)$ and its quadratic variance $\langle M\rangle_t$ are  
\begin{equation}\label{rep:mar}
M_t=\int_0^t g(s,t)d\xi_s,\, \langle M\rangle_t=\int_0^t g(s,t)ds=\int_0^tg^2(s,s)ds,\, t\geq 0,
\end{equation}
where $g(s,t)$ is the solution of the following Wiener-H\"opfner integral equation:
\begin{equation}\label{kernel g}
g(s,t)+\int_0^t g(r,t) \kappa(r,s)dr=1,\,\kappa(s,t)=H(2H-1)(|t-s|^{2H-2}-|t+s|^{2H-2})\,.
\end{equation}
On the other hand, we have the innovation representation immediately from \cite{CCK}:
\begin{equation}\label{eq: innovation}
\xi_t=\int_0^t G(s,t)dM_s,\, t\in [0,T],\,\, G(s,t):=1-\frac{1}{g(s,s)}\int_0^t R(\tau,s)d\tau,\, 0\leq s\leq t \leq T.
\end{equation}
with 
$$
R(s,t):=\frac{\dot{g}(s,t)}{g(t,t)},\, s\neq t,\,\, \dot{g}(s,t):=\frac{\partial}{\partial t}g(s,t).
$$
Now for a smooth function $\psi(t)\in \mathcal{H}$, it is easy to check:
\begin{equation}\label{eq: test}
\int_0^T \psi(t)d\xi_t=\int_0^T \left[\int_{\tau}^Tg(\tau,\,\tau)\frac{\partial G}{\partial t} (\tau,t)\psi(t)dt+\psi(\tau)\right]dB_{\tau}
\end{equation}
and we can define an operator $\mathcal{G}^*$ from $\mathcal{H}$ to the complete subspace $L^2[0,T]$:
$$
(\mathcal{G}^*\psi)(\tau)=\int_{\tau}^T\frac{\partial \mathcal{G}}{\partial t} (\tau,t)\psi(t)dt+\psi(\tau),\, \,   \frac{\partial \mathcal{G}}{\partial t} (\tau,t)=g(\tau,\,\tau)\frac{\partial G}{\partial t} (\tau,t).
$$
the divergence-type integral with respect to msfBm will be defined immediately:


\begin{defn}
Let $u$ be a stochastic process such that for every trajectory it is a mapping from the interval $[0,T]$ to $\mathcal{H}$ and $\mathcal{G}^*u$ is Skorohod integrable with respect to the standard Brownian motion $B_t$. Then we define the extended Wiener integral of $u$ with respect to the mfBm $\xi$ as
\begin{equation}\label{divergence type}
\xi(u):=\int_0^T(\mathcal{G}^*u)(\tau)\delta B_{\tau}.
\end{equation}
\end{defn}
It is easy to check that $\emph{Dom}(\delta_{\xi})=(\mathcal{G}^*)^{-1}(\emph{Dom}(\delta_{B}))$ and for $u\in \emph{Dom}(\delta_{\xi})$ the It\^o-Skorohod integral $\delta_{\xi}(u)$ coincides with the divergence-type integral $\xi(u)$ defined in \eqref{divergence type}. At the same time, from \cite{FHBZ}, we have the following result.

\begin{lem}\label{equivalent derivative}
For any $F\in \mathbb{D}_{B}^{1,2}=\mathbb{D}_{\xi}^{1,2}$, we have
$$
\mathcal{G}^*D_{\xi}F=D_{B}F
$$
where $D_{B}$ denotes the Malliavin derivative with respect to the standard Brownian motion $B$ and $\mathbb{D}_{B}^{1,2}$ the corresponding Sobolev space.
\end{lem}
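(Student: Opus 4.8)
The plan is to check the identity on the dense class $\mathcal{S}$ of smooth random variables, where it reduces to the chain rule, and then to pass to the closure, from which the coincidence $\mathbb{D}_{\xi}^{1,2}=\mathbb{D}_{B}^{1,2}$ will also follow. The decisive ingredient, already produced in the computation preceding the statement, is that for a deterministic $\psi\in\mathcal{H}$ the Wiener integrals satisfy $\xi(\psi)=\int_0^T(\mathcal{G}^*\psi)(\tau)\,dB_\tau$, that is $\xi(\psi)=B(\mathcal{G}^*\psi)$. From this I would first record that $\mathcal{G}^*$ is an isometry from $\mathcal{H}$ onto $L^2([0,T])$: for $\psi,\phi\in\mathcal{H}$,
$$
\langle \mathcal{G}^*\psi,\,\mathcal{G}^*\phi\rangle_{L^2([0,T])}=\mathbf{E}\left[B(\mathcal{G}^*\psi)B(\mathcal{G}^*\phi)\right]=\mathbf{E}\left[\xi(\psi)\xi(\phi)\right]=\langle \psi,\,\phi\rangle_{\mathcal{H}},
$$
where the outer equalities are the It\^o isometry for $B$ and the defining property of $\langle\cdot,\cdot\rangle_{\mathcal{H}}$, and the middle one uses $\xi(\psi)=B(\mathcal{G}^*\psi)$; surjectivity is built into the definition \eqref{divergence type} of the divergence-type integral.

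Next, take $F=f(\xi(h_1),\dots,\xi(h_n))\in\mathcal{S}$ with $f\in C_b^{\infty}(\mathbb{R}^n)$ and $h_1,\dots,h_n\in\mathcal{H}$. By \eqref{mallivanderivative}, $D_{\xi}F=\sum_{i=1}^n\frac{\partial f}{\partial x_i}(\xi(h_1),\dots,\xi(h_n))\,h_i$, and since $\mathcal{G}^*$ acts linearly on the $\mathcal{H}$-valued component,
$$
\mathcal{G}^*D_{\xi}F=\sum_{i=1}^n\frac{\partial f}{\partial x_i}(\xi(h_1),\dots,\xi(h_n))\,\mathcal{G}^*h_i.
$$
Substituting $\xi(h_i)=B(\mathcal{G}^*h_i)$ exhibits $F=f(B(\mathcal{G}^*h_1),\dots,B(\mathcal{G}^*h_n))$ as a smooth functional of $B$ with each $\mathcal{G}^*h_i\in L^2([0,T])$, so the usual chain rule for the Brownian derivative yields
$$
D_{B}F=\sum_{i=1}^n\frac{\partial f}{\partial x_i}(\xi(h_1),\dots,\xi(h_n))\,\mathcal{G}^*h_i.
$$
The two right-hand sides agree, which proves $\mathcal{G}^*D_{\xi}F=D_{B}F$ on $\mathcal{S}$.

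Finally I would extend by density. Applying the isometry pathwise gives $\|D_{\xi}F\|_{\mathcal{H}}=\|\mathcal{G}^*D_{\xi}F\|_{L^2([0,T])}=\|D_{B}F\|_{L^2([0,T])}$, so the norms $\|\cdot\|_{1,2}$ computed through $\xi$ and through $B$ coincide on $\mathcal{S}$; taking closures gives $\mathbb{D}_{\xi}^{1,2}=\mathbb{D}_{B}^{1,2}$, and the identity $\mathcal{G}^*D_{\xi}F=D_{B}F$, holding on the dense set $\mathcal{S}$ with all operators continuous for these norms, persists on the whole space. The bulk of the work is bookkeeping once $\xi(\psi)=B(\mathcal{G}^*\psi)$ is in hand; the genuine obstacle is the analytic control required to promote $\mathcal{G}^*$ to an isometry on all of $\mathcal{H}$ rather than on a smooth subclass, which for $H>1/2$ relies on the integrability encoded in the inner product \eqref{fg} together with the verification that $\mathcal{G}^*h\in L^2([0,T])$ for every $h\in\mathcal{H}$.
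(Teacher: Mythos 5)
The paper never proves this lemma: it is stated bare, as the msfBm analogue of the transfer principle for the operator $K_H^*$ in Chapter 5 of \cite{Nualart}, so there is no line-by-line comparison to make. Your argument is exactly the standard transfer-principle proof that the paper leaves implicit, and its skeleton is sound: the chain-rule computation on $\mathcal{S}$, the pathwise identity $\|D_{\xi}F\|_{\mathcal{H}}=\|\mathcal{G}^*D_{\xi}F\|_{L^2([0,T])}=\|D_{B}F\|_{L^2([0,T])}$, and the closure argument are precisely what is needed, and the chain-rule step using $\xi(h_i)=B(\mathcal{G}^*h_i)$ together with \eqref{mallivanderivative} is correct.

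Two steps need more than you give them. First, the paper derives $\xi(\psi)=B(\mathcal{G}^*\psi)$ only for differentiable $\psi$ (the computation preceding the lemma integrates by parts), so before applying it to arbitrary $h_i\in\mathcal{H}$ you must extend it by density; you flag this yourself, and it is indeed where the analytic work sits. Second, and more seriously, surjectivity of $\mathcal{G}^*:\mathcal{H}\rightarrow L^2([0,T])$ is \emph{not} ``built into the definition \eqref{divergence type}'': that definition only applies $\mathcal{G}^*$, it manufactures no preimages. Surjectivity is what makes the smooth cylindrical functionals of $\xi$ and of $B$ the \emph{same} class, and hence is what yields the equality $\mathbb{D}_{\xi}^{1,2}=\mathbb{D}_{B}^{1,2}$ asserted in the lemma rather than a one-sided inclusion $\mathbb{D}_{B}^{1,2}\supseteq\mathbb{D}_{\xi}^{1,2}$. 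The correct source is Theorem \ref{th:mar}: the two-sided representations \eqref{rep:mar} and \eqref{eq: innovation} give $\mathcal{F}_t^{\xi}=\mathcal{F}_t^{M}=\mathcal{F}_t^{B}$ (recall $B_t=\int_0^t g(s,s)^{-1}dM_s$ with $g(s,s)>0$), hence the first Wiener chaoses of $\xi$ and of $B$ coincide as closed subspaces of $L^2(\Omega)$. Then for $k\in L^2([0,T])$ one has $B(k)=\xi(h)$ for some $h\in\mathcal{H}$ (the image of the isometry $\psi\mapsto\xi(\psi)$ is closed, being the isometric image of a complete space), so $B(k)=B(\mathcal{G}^*h)$ and the It\^o isometry forces $k=\mathcal{G}^*h$. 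With that substitution in place of your one-line appeal to \eqref{divergence type}, your proof is complete.
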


\begin{remark}
Even in \cite{CCK}, Cai et \textit{a.l.} have found the fundamental martingale of mixed fractional Brownian motion for $H<1/2$, but the corresponding divergence type of the mfBm is still not explicit because in this case we do not have the inter-changeble of the integral and derivative of the kernel $g$. The same problem exists in msfBm and we leave this study for further research.
\end{remark}

\subsection{Path-wise integral with respect to $\xi=(\xi_t,\, 0\leq t \leq T)$}\label{ch: path-wise integral}

Let us put $\xi_s=\xi_T$ for $s>T$ and $\xi_s=0$ for $s<0$. The symmetric path-wise integral of a process $(u_t,\, 0\leq t\leq T)$ with respect to $(\xi_t,\, 0\leq t\leq T)$ is defined by
$$
\int_0^Tu(s)\circ d\xi_s=\lim_{\epsilon \downarrow 0} \frac{1}{2\epsilon}\int_0^Tu(s) \left[\xi(s+\epsilon)-\xi(s-\epsilon)     \right]ds
$$
provided that the limit exists in probability. The following lemma explains the relationship between this symmetric path-wise integral and the Skorohod integral:
\begin{lem}\label{relation}
Suppose that the stochastic process $(u_t)_{t\in [0,T]} \in \emph{Dom}(\delta_{\xi})$ satisfying the following conditions: 
\begin{equation}\label{con1}
\int_0^T\int_0^T|D_s^{\xi}u(t)|\left(|t-s|^{2H-2}-(t+s)^{2H-2}\right)dsdt<\infty,\, a.s, 
\end{equation}
\begin{equation}\label{con3}
D_s^{\xi}u(t)=0,\, s>t
\end{equation}
and
\begin{equation}\label{con2}
\int_0^T |D_t^{\xi}u(t)|dt<\infty,\, a.s
\end{equation}
where $D_t^{\xi}u(t)$ means $D_s^{\xi}u(t)$ when $s=t$.Then the symmetric integral exists and the following relation holds:
\begin{equation}\label{eq:relation}
\int_0^Tu(t)\circ d\xi_t=\delta_{\xi}(u)+H(2H-1)\int_0^T\int_0^T D_s^{\xi}u(t)\left(|t-s|^{2H-2}-(t+s)^{2H+2}\right)dsdt+\frac{1}{2}\int_0^TD_t^{\xi}u(t)dt\,.
\end{equation}
\end{lem}
\begin{remark}
{\red Different from the pure fractional case or sub-fractional case, we have in this Lemma an extra part from the standard Brownian motion $\int_0^TD_t^{\xi}u(t)dt$ which is important in our analysis.}
\end{remark}

\begin{proof}
Let
$$
u_t^{\epsilon}=\frac{1}{2\epsilon}\int_{t-\epsilon}^{t+\epsilon}u(s)ds\,.
$$

Using the integration by part formula, we have
$$
\delta_{\xi}Fu=F\delta_{\xi}(u)-\langle D^{\xi} F, u\rangle_{\mathcal{H}},
$$
for $F\in \mathbb{D}^{1,2}_{\xi}$ and $u\in \emph{Dom}(\delta_{\xi})$.  When $u$ satisfied the conditions \eqref{con1} and \eqref{con2}, we have
 \begin{eqnarray*}
    \int_0^T u(s)\frac{\xi_{s+\epsilon}-\xi_{s-\epsilon}}{2\epsilon}ds &=& \int_0^T u(s)\frac{1}{2\epsilon}\int_{s-\epsilon}^{s+\epsilon}d\xi_uds \\
    &=& \int_0^T\delta_{\xi}\left(u(s)\frac{1}{2\epsilon}\mathbf{1}_{[s-\epsilon,s+\epsilon]}\right)ds+\frac{1}{2\epsilon}\int_0^T\langle D^{\xi}u(s),\mathbf{1}_{[s-\epsilon,s+\epsilon]}\rangle_{\mathcal{H}}ds \\
    &=& \delta_{\xi}(u^{\epsilon})+\frac{1}{2\epsilon}\int_0^T\langle D^{\xi}u(s),\mathbf{1}_{[s-\epsilon,s+\epsilon]}\rangle_{\mathcal{H}}ds\,.
 \end{eqnarray*}

The equation \eqref{eq:relation} can be easily obtained by taking the limit $\epsilon \rightarrow 0$ on both sides of the above equation.
\end{proof}

\section{Estimation of the drift parameter for msfOU process}\label{sec: estimation}
The drift estimations of Ornstein-Uhlenbeck processes with different noises have attracted many interests. It is an important subject in the financial economitreics because one will make a decision which model the volatility satisfies when we can observe the past data. In this part we consider the mixed sub-fractional O-U process (msfOU) 
 $X=(X_t)_{0\leq t \leq T}$ which satisfies the following stochastic differential equation
\begin{equation}\label{msfOU}
dX_t=-\vartheta X_tdt+d\xi_t,\, 0\leq t \leq T.
\end{equation}
with the unknown parameter $\vartheta>0$ is unknown.  Our aim is to estimate this parameter from the continuous observed data $(X_t,\, 0\leq t \leq T)$. The MLE of the mixed fractional O-U process has been considered in \cite{CK18b}, we can also construct the same estimator for the process \eqref{msfOU} but the asymptotical normality is still not clear and this type of estimator is hard to simulate. So  here we still consider the LSE  as presented in \cite{HN10}. The most difficulty or the difference between the sub-fractional case and the fractional case is the stationarity: when sub-fractional Brownian motion is not stationary so with the msfOU process, the strong consistency of the LSE is not immediate. We will write the explicit formula and analyze it with the decomposition $S_t^H=\frac{1}{\sqrt{2}}(B_t^H+B_{-t}^H)$.
\begin{remark}
The non-ergodic case will be considered in Section \ref{sec: non ergodic} because the quadratic variation variation of the mixed case is not 0, we can not use the Young integral and obtain the results directly from \cite{el2016least}.
\end{remark}
\subsection{Least square estimator}
From \cite{HN10}, we can easily obtain the Least Square Estimator 
\begin{equation}\label{eq:least square}
\bar{\vartheta}_T=-\frac{\int_0^TX_tdX_t}{\int_0^TX_t^2dt}=\vartheta-\frac{\int_0^TX_td\xi_t}{\int_0^TX_t^2dt}\,,
\end{equation}
where the stochastic integral $\int_0^TX_td\xi_t$ is interpreted as the Skorohod integral and it will be denoted as $\int_0^TX_t\delta \xi_t$ and we have the following result:
\begin{lem}
For $H>1/2$, the LSE
\begin{equation}\label{eq3: least square}
\bar{\vartheta}_T=-\frac{X_T^2}{2\int_0^T X_t^2dt}+\frac{\alpha_{H}\int_0^T\int_0^t\exp(-\vartheta(t-s))\left((t-s)^{2H-2}-(t+s)^{2H-2}\right)dsdt}{\int_0^TX_t^2dt}+\frac{T}{2\int_0^TX_t^2dt}\,.
\end{equation}
\end{lem}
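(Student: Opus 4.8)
The plan is to start from the second expression in \eqref{eq:least square}, namely $\bar{\vartheta}_T = \vartheta - \left(\int_0^T X_t \delta\xi_t\right)\big/\int_0^T X_t^2\,dt$, so that the whole problem reduces to evaluating the Skorohod integral $\delta_\xi(X) = \int_0^T X_t\,\delta\xi_t$ in closed form. The strategy is to pass through the symmetric (path-wise) integral: I would apply the conversion formula \eqref{eq:relation} of Lemma \ref{relation} to $u = X$, which expresses $\int_0^T X_t \circ d\xi_t$ as $\delta_\xi(X)$ plus two explicit trace corrections, and then compute the symmetric integral separately by a chain-rule argument. Solving for $\delta_\xi(X)$ and substituting back into the estimator should produce exactly the three terms of \eqref{eq3: least square}.

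First I would compute the Malliavin derivative of the solution \eqref{xsolution}. Since $X_t = \int_0^t e^{-\vartheta(t-s)}\,d\xi_s$ lies in the first Wiener chaos of $\xi$, its derivative is the deterministic kernel $D_r^\xi X_t = e^{-\vartheta(t-r)}\mathbf{1}_{[0,t]}(r)$. This kernel is bounded by $e^{|\vartheta|T}$, so conditions \eqref{con1} and \eqref{con2} are immediate for $H>1/2$: the weight $|t-s|^{2H-2}$ is integrable on $[0,T]^2$ precisely because $2H-2>-1$, and the diagonal gives $\int_0^T |D_t^\xi X_t|\,dt = T < \infty$. With this kernel the two trace terms in \eqref{eq:relation} evaluate at once: restricting to $s\le t$ turns the double integral into $\alpha_H\int_0^T\int_0^t e^{-\vartheta(t-s)}\left((t-s)^{2H-2}+(t+s)^{2H-2}\right)ds\,dt$, while the diagonal correction is $\tfrac12\int_0^T D_t^\xi X_t\,dt = T/2$, using $D_t^\xi X_t = 1$.

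Next I would evaluate the symmetric integral $\int_0^T X_t \circ d\xi_t$ through the ordinary chain rule. Because $H>1/2$, the symmetric integral obeys the Newton--Leibniz rule, so $\int_0^T X_t \circ dX_t = \tfrac12 X_T^2$ (recall $X_0=0$). Splitting the driver via the equation \eqref{msfOU}, $dX_t = -\vartheta X_t\,dt + d\xi_t$, and using that the $dt$-part has bounded variation, gives $\int_0^T X_t \circ dX_t = -\vartheta\int_0^T X_t^2\,dt + \int_0^T X_t \circ d\xi_t$; hence $\int_0^T X_t \circ d\xi_t = \tfrac12 X_T^2 + \vartheta\int_0^T X_t^2\,dt$. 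Rearranging \eqref{eq:relation} then yields
\[
\delta_\xi(X) = \tfrac12 X_T^2 + \vartheta\int_0^T X_t^2\,dt - \alpha_H\int_0^T\int_0^t e^{-\vartheta(t-s)}\left((t-s)^{2H-2}+(t+s)^{2H-2}\right)ds\,dt - \tfrac{T}{2}.
\]
Substituting this into $\bar{\vartheta}_T = \vartheta - \delta_\xi(X)\big/\int_0^T X_t^2\,dt$, the $\vartheta\int_0^T X_t^2\,dt$ contribution cancels the leading $\vartheta$, and the three remaining terms give exactly \eqref{eq3: least square}.

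The step I expect to be the main obstacle is justifying the chain rule for the symmetric integral in this mixed setting, i.e.\ that $\int_0^T X_t \circ dX_t = \tfrac12 X_T^2$. Unlike pure sfBm with $H>1/2$, the driver $\xi = W + S^H$ carries a nontrivial quadratic variation coming from the Brownian component, so the argument is really that the symmetric integral coincides with the Stratonovich integral for the semimartingale part and with the $H>1/2$ path-wise integral for the sub-fractional part; the resulting Brownian correction is precisely the diagonal term $T/2$. I would either invoke the existence-and-chain-rule results underlying Lemma \ref{relation} (conditions \eqref{con1}--\eqref{con2} having been verified above) or argue directly from the definition of the symmetric integral, carefully tracking the diagonal contribution of $D_t^\xi X_t$.
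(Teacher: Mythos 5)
Your proposal is correct and takes essentially the same route as the paper: interpret the integral in \eqref{eq:least square} as a Skorohod integral, convert it to the symmetric integral via Lemma \ref{relation} using $D_s^{\xi}X_t=e^{-\vartheta(t-s)}\mathbf{1}_{[0,t]}(s)$, evaluate the symmetric integral by the chain rule as $\tfrac{1}{2}X_T^2+\vartheta\int_0^T X_t^2\,dt$, and substitute back. You merely fill in details that the paper's two-line proof leaves implicit, namely the verification of \eqref{con1}--\eqref{con2} and the identification of the diagonal correction $\tfrac{1}{2}\int_0^T D_t^{\xi}X_t\,dt=T/2$ coming from the Brownian component.
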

\begin{proof}
With the relationship of \eqref{eq:relation} we have
$$
\int_0^TX_t\delta \xi_t = \int_0^T X_t\circ d\xi_t-\alpha_{H}\int_0^T\int_0^t \exp\left(-\vartheta\left(t-s\right)\right)\left(\left(t-s\right)^{2H-2}-\left(t+s\right)^{2H-2}\right)dsdt-\frac{T}{2}.
$$
Together with the fact
$$
\int_0^T X_t\circ d\xi_t=\int_0^T X_t\circ dX_t+\vartheta\int_0^T X_t^2dt=\frac{X_T^2}{2}+\vartheta\int_0^T X_t^2dt.
$$
we can easily obtain the result.
\end{proof}
The strong consistency and asymptotical normality of the LSE will be presented with the following two theorems:
\begin{thm}\label{th: conv pr}
The LSE $\bar{\vartheta}_T$ defined in \eqref{eq3: least square} converges almost surely to $\vartheta$ as $T\rightarrow \infty$, that is 
$$
\lim_{T\rightarrow \infty}\bar{\vartheta}_T=\vartheta,\, a.s.\,.
$$
\end{thm}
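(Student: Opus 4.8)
The plan is to start from the error representation \eqref{eq:least square},
$$\bar\vartheta_T-\vartheta=-\frac{\int_0^TX_t\delta\xi_t}{\int_0^TX_t^2dt},$$
and to show that after normalising by $T$ the denominator converges in probability to a strictly positive constant while the numerator converges to zero; Slutsky's lemma then yields $\bar\vartheta_T-\vartheta\to0$ in probability. Concretely I would prove (i) $\frac1T\int_0^TX_t^2dt\to c$ in $L^2(\mathbf P)$ for some constant $c>0$, and (ii) $\frac1T\int_0^TX_t\delta\xi_t\to0$ in probability, then combine them. It is convenient to run the computation through the closed form \eqref{eq3: least square}, whose three summands I analyse separately.

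For (i), note that $X$ is a centred Gaussian process by \eqref{xsolution} and $\mathbf E\xi_t=0$, so I would establish $L^2$ convergence by the mean/variance route. The mean is handled by Cesàro: using \eqref{xsolution} together with the covariance structure \eqref{fg} one shows $\mathbf E[X_t^2]\to c$ as $t\to\infty$, the Brownian part contributing $\frac1{2\vartheta}$ and the sub-fractional part converging to a finite positive limit, whence $\frac1T\int_0^T\mathbf E[X_t^2]dt\to c$. For the variance, Gaussianity gives $\cov(X_s^2,X_t^2)=2\cov(X_s,X_t)^2$, so that
$$\var\!\left(\frac1T\int_0^TX_t^2dt\right)=\frac{2}{T^2}\int_0^T\int_0^T\cov(X_s,X_t)^2\,ds\,dt.$$
The key estimate is that $|\cov(X_s,X_t)|$ is uniformly bounded and decays, for $|t-s|$ large, like $|t-s|^{2H-2}$. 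Splitting the square into the strip $|t-s|\le1$ and its complement then gives $\int_0^T\int_0^T\cov(X_s,X_t)^2\,ds\,dt=O(T)$ for $H<3/4$, $O(T\log T)$ for $H=3/4$ and $O(T^{4H-2})$ for $H>3/4$, in every case $o(T^2)$, so the variance tends to $0$ for all $H\in(1/2,1)$.

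For (ii) I would use the decomposition obtained in the proof of \eqref{eq3: least square}, namely
$$\int_0^TX_t\delta\xi_t=\frac{X_T^2}{2}+\vartheta\int_0^TX_t^2dt-\alpha_HI_T-\frac T2,$$
where $I_T=\int_0^T\int_0^te^{-\vartheta(t-s)}((t-s)^{2H-2}+(t+s)^{2H-2})\,ds\,dt$ is the double integral appearing in \eqref{eq3: least square}. Here $\frac{X_T^2}{T}\to0$ in probability because $\mathbf E[X_T^2]$ stays bounded, and a direct computation shows $\frac{I_T}{T}\to L:=\vartheta^{1-2H}\Gamma(2H-1)$: after the substitution $u=t-s$ the $(t-s)^{2H-2}$ part yields $L$ by Cesàro, while the non-stationary $(t+s)^{2H-2}$ part is $O(T^{2H-1})=o(T)$. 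Finally, taking expectations in the displayed decomposition and using $\mathbf E[\delta_\xi(X)]=0$ produces the identity $\vartheta c=\alpha_HL+\tfrac12$, which is exactly what forces $\frac1T\int_0^TX_t\delta\xi_t\to\vartheta c-\alpha_HL-\tfrac12=0$ in probability; together with (i) this gives $\bar\vartheta_T\to(\alpha_HL+\tfrac12)/c=\vartheta$.

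The main obstacle is step (i), and specifically the two facts about the sub-fractional kernel that have no counterpart in the stationary fractional case: the existence and positivity of $c=\lim_t\mathbf E[X_t^2]$, and the decay bound on $\cov(X_s,X_t)$. Both are complicated by the genuinely non-stationary term $(u+v)^{2H-2}$ in \eqref{fg} (equivalently in $K_H$), which must be shown to be negligible, as it contributes pieces decaying like $(s+t)^{2H-2}$ whose squared integral over $[0,T]^2$ is again $o(T^2)$. Controlling these non-stationary contributions uniformly, rather than invoking an ergodicity/stationarity argument as one could for the fractional Ornstein--Uhlenbeck process, is the technical heart of the proof and is where the estimates inspired by \cite{HN10,HN17} are needed.
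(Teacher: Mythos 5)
Your proposal is correct, and its skeleton is the same as the paper's: both work from the three-term formula \eqref{eq3: least square}, prove $\frac1T\int_0^TX_t^2\,dt\to c$ in probability by a second-moment/Chebyshev argument, and show that the deterministic double integral has the right Ces\`aro limit with the non-stationary $(t+s)^{2H-2}$ contribution negligible. The differences are in execution, and two are worth recording. First, the paper splits $X=X^{(1)}+X^{(2)}$ into its Brownian and sub-fractional parts, disposes of $X^{(1)}$ by ergodicity, and runs the Wick-formula variance bound only on $X^{(2)}$, using its Lemma \ref{cov distant} for the covariance decay; you run the variance computation on $X$ as a whole, which is equally valid because the Brownian contribution to $\cov(X_s,X_t)$ decays exponentially --- but note that the polynomial decay estimate you invoke is exactly Lemma \ref{cov distant}, so you are not saving that work. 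Second, and more interestingly, the paper computes the limit $c=\frac{1}{2\vartheta}+H\vartheta^{-2H}\Gamma(2H)$ explicitly (citing \cite{el2016least} for the sub-fractional Ces\`aro limit, see \eqref{eq: for practical}) and then checks that the three limits recombine into $\vartheta$; you instead obtain the identity $\vartheta c=\alpha_H L+\frac12$ for free by taking expectations in the Skorohod decomposition and using $\mathbf{E}[\delta_\xi(X)]=0$, so the final cancellation is automatic and the explicit value of $c$ is never needed. That bookkeeping device is genuinely cleaner, and it explains \emph{why} the constants must match; likewise your pointwise bound $(2t-u)^{2H-2}\le t^{2H-2}$, giving $O(T^{2H-1})$ for the non-stationary term, is more elementary than the paper's L'H\^opital argument. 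Two caveats: you still must prove that $\mathbf{E}[X_t^2]$ (or at least its Ces\`aro mean) converges to a finite limit --- you correctly flag this as the technical heart, and it is precisely where the paper leans on \cite{el2016least} --- and the zero-mean identity requires $X\in\mathrm{Dom}(\delta_\xi)$, which is already implicit in writing \eqref{eq:least square}, so it costs nothing extra.
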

The asymptotical laws of the LSE defined in \eqref{eq:least square} depends on the the Hurst parameter $H$ and we have the following results.

\begin{thm}\label{th: asym}
For $H\in (1/2,3/4)$ we have
\begin{equation} \label{varthetaclt1}
\sqrt{T}\left( \bar{\vartheta}_{T}-\vartheta \right) \xrightarrow{\mathcal {L}}%
\mathcal{N}\left( 0,\sigma_{H}^{2}\right) \,,
\end{equation}%
where $\sigma_{H}=\frac{\sqrt{\vartheta^{1-4H}H^{2}\left(4H-1\right)
\left(\Gamma\left(2H\right)^{2}+\frac{\Gamma(2H)\Gamma(3-4H)
\Gamma(4H-1)}{\Gamma(2-2H)}\right)+\frac{1}{2\vartheta}}}{\vartheta^{-2H} H\Gamma(2H)+\frac{1}{2\vartheta}}$.

For $H=3/4$, the LSE is also asymptotically normal with the convergence rate $\frac{\sqrt{T}}{\sqrt{\log (T)}}$, that is
\begin{equation}\label{varthetaclt2}
\frac{\sqrt{T}}{\sqrt{\log(T)}}\left(\bar{\vartheta}_T-\vartheta\right)\xrightarrow{\mathcal {L}}\mathcal{N}\left(0,\frac{9}{4\vartheta^2\left(\frac{3\sqrt{\pi}\vartheta^{-3/2}}{4}+\frac{1}{2}     \right)^2}\right).
\end{equation}

For $H>3/4$, we have
\begin{equation}\label{varthetaclt3}
T^{2-2H}\left(\bar{\vartheta}_T-\vartheta\right)\xrightarrow{\mathcal{L}}
-\frac{\vartheta^{-1}R_1}{\vartheta^{-2H}H\Gamma(2H)+\frac{1}{2\vartheta}}\,.
\end{equation}
where $R_1$ is the Rosenblatt random variables defined in Theorem 5.2 of \cite{HN17} and $\Gamma(\cdot)$ denotes the Gamma function.
\end{thm}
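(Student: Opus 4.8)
The plan is to write the estimation error as a ratio and treat its numerator and denominator separately. Set $A_T=\int_0^T X_t^2\,dt$ and $N_T=\int_0^T X_t\,\delta\xi_t$, so that by \eqref{eq:least square} we have $\bar{\vartheta}_T-\vartheta=-N_T/A_T$. By Slutsky's theorem it suffices to establish (i) the law of large numbers $A_T/T\to\mu$ in probability, with $\mu=\vartheta^{-2H}H\Gamma(2H)+\tfrac{1}{2\vartheta}$, and (ii) a limit theorem for $N_T$ under the normalisation dictated by $H$; dividing the two limits yields the three regimes, with the common denominator $\mu$ (resp. $\mu^2$ after squaring) explaining why $\mu$ appears in every limit. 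For (i), since $\xi=W+S^H$ has covariance $\mathbf{E}[\xi_u\xi_v]=u\wedge v+R_H(u,v)$, the Gaussian process $X_t=\int_0^t e^{-\vartheta(t-s)}d\xi_s$ splits as $X_t=X_t^W+X_t^S$ with $\mathbf{E}[X_t^2]\to\mu$; I would compute $\mathbf{E}[X_t^2]$ explicitly and bound $\mathrm{Var}(A_T/T)=O(1/T)$ to obtain $L^2$, hence in-probability, convergence.

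For (ii) the starting point is the identity of the preceding lemma, which, after subtracting means and using that the Skorohod integral has zero expectation, gives the clean second-chaos representation
$$
N_T=\tfrac12\bigl(X_T^2-\mathbf{E}X_T^2\bigr)+\vartheta\bigl(A_T-\mathbf{E}A_T\bigr).
$$
Writing $\rho(s,t)=\mathbf{E}[X_sX_t]=\rho^W(s,t)+\rho^S(s,t)$ and applying the Isserlis/Wick formula yields $\mathrm{Var}(A_T)=2\int_0^T\!\int_0^T\rho(s,t)^2\,ds\,dt$, while the $X_T^2$ terms contribute only $O(1)$; hence $\mathrm{Var}(N_T)\sim\vartheta^2\cdot 2\int_0^T\!\int_0^T\rho^2$. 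Expanding $\rho^2=(\rho^W)^2+2\rho^W\rho^S+(\rho^S)^2$ and using the independence of $W$ and $S^H$ to kill the odd cross-moments, the three pieces are additive in the variance. The exponential decay of $\rho^W$ makes the $(\rho^W)^2$ and $\rho^W\rho^S$ integrals grow linearly in $T$ for every $H$; the decisive term is $(\rho^S)^2$, for which the stationary asymptotic $\rho^S(s,t)\sim H(2H-1)\vartheta^{-2}|t-s|^{2H-2}$ gives $(\rho^S)^2\sim c\,|t-s|^{4H-4}$. The exponent $4H-4$ is $<-1$, $=-1$, or $>-1$ according as $H<3/4$, $H=3/4$, or $H>3/4$, making $\mathrm{Var}(N_T)$ of order $T$, $T\log T$, or $T^{4H-2}$ respectively.

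The distributional limits then follow by regime. For $H\in(1/2,3/4)$ the variance is of order $T$; writing $N_T=I_2(f_T)$ and normalising by $\sqrt{T}$, I would invoke the fourth-moment theorem of Nualart–Peccati, verifying that the contraction $\|f_T\otimes_1 f_T\|_{\mathcal{H}^{\otimes2}}\to0$ (equivalently that the fourth cumulant vanishes), which delivers asymptotic normality; the limiting variance assembles the Brownian contribution $\tfrac{1}{2\vartheta}$ together with the fractional and cross contributions, which combine into the Gamma-function expression in $\sigma_H^2$, divided by $\mu^2$ from the denominator. The case $H=3/4$ is the boundary version of the same argument, the only change being the logarithmic normalisation $\sqrt{T/\log T}$ forced by the marginal divergence of $\int|t-s|^{-1}$. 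For $H>3/4$ the Brownian fluctuations, of order $\sqrt{T}$, are negligible against the order-$T^{2H-1}$ fractional fluctuations, so the fourth-moment criterion fails by design; instead I would show that $T^{-(2H-1)}f_T$ converges in $\mathcal{H}^{\otimes2}$ to the Rosenblatt kernel and conclude $N_T/T^{2H-1}\to\vartheta^{-1}R_1$ by $L^2$-continuity of $I_2$, recovering \eqref{varthetaclt3} through Slutsky.

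The main obstacle is the variance asymptotics in the mixed sub-fractional setting. Unlike the stationary fractional case, the sub-fractional kernel carries the non-stationary term $(t+s)^{2H-2}$, so $\rho^S$ is not a function of the lag alone and the double integrals cannot be reduced to a single lag integral by translation invariance; the core estimate is to show that this $(t+s)^{2H-2}$ contribution is uniformly of lower order and that $\rho^S(s,t)$ is replaced, to leading order, by the stationary fractional-OU covariance, so that the classical fractional-OU asymptotics govern the leading constants. Extracting the exact Gamma-function constants from the resulting lag integrals, and, for $H<3/4$, verifying the contraction estimate $\|f_T\otimes_1 f_T\|_{\mathcal{H}^{\otimes2}}\to0$ uniformly across the Brownian and fractional components, is the most delicate part of the argument.
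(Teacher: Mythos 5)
Your strategy coincides with the paper's: write $\bar{\vartheta}_T-\vartheta=-N_T/A_T$, prove the law of large numbers for $A_T/T$, and handle the second-chaos numerator by a fourth-moment criterion for $H\le 3/4$ and by kernel convergence towards the Rosenblatt variable for $H>3/4$. The differences you introduce are cosmetic rather than structural: the Nualart--Peccati contraction condition $\|f_T\otimes_1 f_T\|_{\mathcal{H}^{\otimes 2}}\to 0$ is equivalent to the Nualart--Ortiz-Latorre condition the paper checks (namely that $\|DF_T\|_{\mathcal{H}}^2$ converges in $L^2$ to a constant), and your identity $N_T=\tfrac{1}{2}\left(X_T^2-\mathbf{E}X_T^2\right)+\vartheta\left(A_T-\mathbf{E}A_T\right)$ combined with the Wick formula $\mathrm{Var}(A_T)=2\iint\rho^2$ is a repackaging of the paper's direct computation of $\mathbf{E}F_T^2$ for $F_T=\frac{1}{2\sqrt{T}}\iint_{[0,T]^2}e^{-\vartheta|t-s|}d\xi_s d\xi_t$.

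The genuine gap is in the variance constant for $H\in(1/2,3/4)$, and your write-up is internally inconsistent precisely there. You correctly note that $\iint_{[0,T]^2}\rho^W\rho^S\,ds\,dt$ grows linearly in $T$: $\rho^W$ decays exponentially in the lag while $\rho^S(s,s)=\mathbf{E}\bigl(X_s^{(2)}\bigr)^2$ stays bounded away from zero, so the integral over the diagonal band alone is of order $T$. Hence the cross term contributes the strictly positive constant $4\vartheta^2\lim_{T\to\infty}\frac{1}{T}\iint\rho^W\rho^S$ to $\lim_{T\to\infty}\mathrm{Var}(N_T)/T$. But you then assert that the Brownian, fractional and cross contributions ``combine into the Gamma-function expression in $\sigma_H^2$''. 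They cannot: the numerator of the stated $\sigma_H^2$ is exactly the pure-fBm constant of Hu--Nualart plus the pure-Brownian constant $\frac{1}{2\vartheta}$, leaving no room for an additional positive term. Executed faithfully, your plan therefore ends with a limiting variance strictly larger than the one in \eqref{varthetaclt1}; to reach the theorem as stated you would have to show the cross term vanishes, and it does not. You should also know that the paper's own proof does not resolve this either: it introduces $F_{1,T}=\frac{1}{2\sqrt{T}}\iint e^{-\vartheta|t-s|}dW_t dW_s$ and $F_{2,T}=\frac{1}{2\sqrt{T}}\iint e^{-\vartheta|t-s|}dS_t^H dS_s^H$ and claims $\mathbf{E}F_T^2=\mathbf{E}(F_{1,T}^2+F_{2,T}^2)$, silently dropping the mixed integral $F_{3,T}=\frac{1}{2\sqrt{T}}\iint e^{-\vartheta|t-s|}dW_s dS_t^H$; independence of $W$ and $S^H$ makes the three pieces uncorrelated, but $4\mathbf{E}F_{3,T}^2$ converges to the same strictly positive cross constant. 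So your more careful bookkeeping exposes a discrepancy in the target formula rather than closing the proof. The problem is confined to $H<3/4$: in the other two regimes the cross and Brownian contributions are $O(T)$ and are annihilated by the normalisations $T\log T$ and $T^{4H-2}$, so the $H=3/4$ and $H>3/4$ parts of your proposal (and of the paper) stand.
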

\begin{remark}
Although we have found the asymptotical properties for the LSE, but for different $H$ we have not a same convergence rate and variance. If we want to achieve this goal perhaps we can consider the one-step MLE with the local asymptotical property (LAN). We will leave it for the future research.
\end{remark}

\subsection{A practical estimator}

Though we have obtained some desired asymptotical properties of LSE, but when the LSE $\bar{\vartheta}_T$ depends on the unknown parameter $\vartheta$, it is still not possible to do the simulation. However, thanks to the conclusion \eqref{eq: for practical}
$$
\frac{1}{T}\int_0^TX_t^2dt \stackrel{\textit{a.s.}}{\longrightarrow}\frac{1}{2\vartheta}+H\vartheta^{-2H}\Gamma(2H).
$$
we can propose a practical estimator. In order to achieve this goal, let us define a function $p(\vartheta)=\frac{1}{2\vartheta}+H\vartheta^{-2H}\Gamma(2H)$. Then a practical estimator $\tilde{\vartheta}_T$ can be defined by
\begin{equation}\label{eq: practical estimator}
\tilde{\vartheta}_T=p^{-1}\left(\frac{1}{T}\int_0^T X_t^2\right).
\end{equation}

Obvious estimator $\tilde{\vartheta}_T$ converges to $\vartheta$ almost surely when $T\rightarrow \infty$. Moreover, we can obtain the asymptotical normality of $\tilde{\vartheta}_T$ with the Delta method. For the sake of saving space, we only present the case of $H\in (1/2,3/4)$ here and the other two cases ($H=3/4$ and $H \in (3/4,1)$) can also be obtained by the same method.
\begin{thm}\label{asym law ergodic}
As $T\rightarrow \infty$, when $H\in (1/2, 3/4)$
$$
\sqrt{T}\left( \tilde{\vartheta}_{T}-\vartheta \right) \xrightarrow{\mathcal {L}}%
\mathcal{N}\left( 0,\frac{\sigma_{H}^{2}\left(H\Gamma(2H)\vartheta^{1-2H}+\frac{1}{2}
\right)^{2}}{\vartheta^{2}}\right) \,,
$$
where $\sigma_{H}=\frac{\sqrt{\vartheta^{1-4H}H^{2}\left(4H-1\right)
\left(\Gamma\left(2H\right)^{2}+\frac{\Gamma(2H)\Gamma(3-4H)
\Gamma(4H-1)}{\Gamma(2-2H)}\right)+\frac{1}{2\vartheta}}}{\vartheta^{-2H} H\Gamma(2H)+\frac{1}{2\vartheta}}$
\end{thm}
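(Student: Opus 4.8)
The plan is to prove this by the Delta method applied to the smooth, strictly monotone reparametrisation $p$. The practical estimator $\tilde{\vartheta}_T$ in \eqref{eq: practical estimator} is $p^{-1}$ evaluated at the empirical second moment $\frac{1}{T}\int_0^T X_t^2\,dt$, so the whole statement reduces to (i) a central limit theorem for this quadratic functional and (ii) a routine linearisation through $p^{-1}$. The genuine probabilistic content sits entirely in (i), and I intend to import it from the analysis already carried out for Theorem \ref{th: asym}.

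First I would establish the central limit theorem
$$
\sqrt{T}\Big(\tfrac{1}{T}\textstyle\int_0^T X_t^2\,dt-p(\vartheta)\Big)\xrightarrow{\mathcal{L}}\mathcal{N}(0,v_H^2)
$$
with an explicit variance $v_H^2$. The cleanest route reuses the identity obtained in the computation leading to \eqref{eq3: least square}, namely
$$
\int_0^T X_t\delta\xi_t=\frac{X_T^2}{2}+\vartheta\int_0^T X_t^2\,dt-\alpha_H\int_0^T\!\!\int_0^t e^{-\vartheta(t-s)}\big((t-s)^{2H-2}+(t+s)^{2H-2}\big)\,ds\,dt-\frac{T}{2}\,.
$$
Since the Skorohod integral on the left has zero mean, subtracting expectations cancels the deterministic double integral and the linear term, leaving
$$
\frac{1}{\sqrt{T}}\int_0^T X_t\delta\xi_t=\frac{1}{2\sqrt{T}}\big(X_T^2-\mathbf{E}X_T^2\big)+\frac{\vartheta}{\sqrt{T}}\Big(\int_0^T X_t^2\,dt-\mathbf{E}\!\int_0^T X_t^2\,dt\Big)\,.
$$
Because $X_T$ has uniformly bounded variance, $X_T^2-\mathbf{E}X_T^2=O_{\mathbf{P}}(1)$, so the first term is $o_{\mathbf{P}}(1)$; the fluctuations of the quadratic functional and of $\int_0^T X_t\delta\xi_t$ are therefore asymptotically proportional. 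The Gaussian CLT for the Skorohod integral when $H\in(1/2,3/4)$ is precisely what underlies \eqref{varthetaclt1}, so I can transfer it to $\frac{1}{T}\int_0^T X_t^2\,dt$; together with $\frac{1}{T}\mathbf{E}\!\int_0^T X_t^2\,dt\to p(\vartheta)$ (the centering error being $o(T^{-1/2})$, hence harmless) this yields the displayed CLT.

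Next I would run the Delta method. The map $p(\vartheta)=\frac{1}{2\vartheta}+H\Gamma(2H)\vartheta^{-2H}$ is $C^1$ with
$$
p'(\vartheta)=-\frac{1}{2\vartheta^2}-2H^2\Gamma(2H)\vartheta^{-2H-1}<0\qquad(\vartheta>0)\,,
$$
so $p$ is a diffeomorphism of $(0,\infty)$ and $p^{-1}$ is differentiable at $p(\vartheta)$ with derivative $1/p'(\vartheta)$. Applying the Delta method to $\tilde{\vartheta}_T=p^{-1}\big(\frac{1}{T}\int_0^T X_t^2\,dt\big)$ gives
$$
\sqrt{T}(\tilde{\vartheta}_T-\vartheta)\xrightarrow{\mathcal{L}}\mathcal{N}\Big(0,\frac{v_H^2}{p'(\vartheta)^2}\Big)\,.
$$
It then remains to verify that $v_H^2/p'(\vartheta)^2$ equals the variance in the statement; this is pure algebra once one uses the identity $H\Gamma(2H)\vartheta^{1-2H}+\frac{1}{2}=\vartheta\,p(\vartheta)$, which rewrites the factor $\big(H\Gamma(2H)\vartheta^{1-2H}+\frac12\big)^2/\vartheta^2$ as $p(\vartheta)^2$ and lets it be matched against the denominator of $\sigma_H$.

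The hard part is the first step: the CLT for $\frac{1}{T}\int_0^T X_t^2\,dt$ and the explicit evaluation of $v_H^2$. This is where the Malliavin--Stein (fourth moment) apparatus and the delicate covariance estimates for the nonstationary sub-fractional kernel enter, and it is the only genuinely nontrivial ingredient. Since, however, these estimates are exactly the ones already proved for Theorem \ref{th: asym} in the Gaussian regime $H<3/4$, the present statement follows by combining them with the elementary reduction above; the remaining steps are then routine differentiation and simplification.
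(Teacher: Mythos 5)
Your overall strategy is the same as the paper's: reduce the statement to a CLT for the empirical second moment $\frac{1}{T}\int_0^T X_t^2\,dt$ and then push it through $p^{-1}$ by the Delta method. The only structural difference is how the intermediate CLT is obtained: you center the Skorohod-integral identity directly (using that $\mathbf{E}\int_0^T X_t\delta\xi_t=0$ and that $X_T^2-\mathbf{E}X_T^2=O_{\mathbf{P}}(1)$), whereas the paper rewrites $\frac{1}{T}\int_0^T X_t^2\,dt-p(\vartheta)$ in terms of $\frac{1}{\bar{\vartheta}_T}-\frac{1}{\vartheta}$ via \eqref{eq3: least square} and invokes the CLT \eqref{varthetaclt1} for the LSE. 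Both routes rest on the same fourth-moment-theorem computation and both give the same intermediate variance: writing $\sigma_0^2$ for the quantity under the square root in $\sigma_H$ (so that $\sigma_H=\sigma_0/p(\vartheta)$), one gets
\[
\sqrt{T}\Bigl(\tfrac{1}{T}\textstyle\int_0^T X_t^2\,dt-p(\vartheta)\Bigr)\xrightarrow{\mathcal{L}}\mathcal{N}\bigl(0,\,\sigma_0^2/\vartheta^2\bigr)\,,
\]
i.e.\ your $v_H^2=\sigma_0^2/\vartheta^2$. (Incidentally, your parenthetical claim that the centering error is $o(T^{-1/2})$ is exactly where $H<3/4$ enters: the bias of $\frac{1}{T}\mathbf{E}\int_0^T X_t^2\,dt$ is $O(T^{2H-2})$, and $\sqrt{T}\,T^{2H-2}\to 0$ only for $H<3/4$; this should be proved, not asserted.)

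The genuine gap is in your last step, the claim that $v_H^2/p'(\vartheta)^2$ ``equals the variance in the statement by pure algebra.'' It does not. Your Delta method yields the variance $\sigma_0^2/\bigl(\vartheta^2 p'(\vartheta)^2\bigr)$ with
\[
p'(\vartheta)=-\frac{1}{2\vartheta^2}-2H^2\Gamma(2H)\vartheta^{-2H-1}\,,
\]
while the variance displayed in the theorem is, using your own identity $H\Gamma(2H)\vartheta^{1-2H}+\frac12=\vartheta p(\vartheta)$,
\[
\frac{\sigma_H^2\bigl(H\Gamma(2H)\vartheta^{1-2H}+\frac12\bigr)^2}{\vartheta^2}=\sigma_H^2\,p(\vartheta)^2=\sigma_0^2\,.
\]
These two expressions agree if and only if $\vartheta^2p'(\vartheta)^2=1$, i.e.\ $\frac{1}{2\vartheta}+2H^2\Gamma(2H)\vartheta^{-2H}=1$, which is not an identity in $(\vartheta,H)$ (for instance $\vartheta=1$, $H=3/5$ gives $\frac12+0.72\,\Gamma(1.2)\approx 1.16$). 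So the factor $1/p'(\vartheta)^2$ produced by differentiating $p^{-1}$ is simply left unmatched by the stated formula; the identity you invoke only removes the $p(\vartheta)$'s. Carried out honestly, your argument proves a CLT with variance $\sigma_0^2/(\vartheta^2p'(\vartheta)^2)$, not the one in the statement, and the discrepancy must be flagged rather than absorbed into ``routine simplification.'' A useful sanity check is the pure-fBm analogue (Theorem 4.1 of \cite{HN10}), where $p(\vartheta)=H\Gamma(2H)\vartheta^{-2H}$ and the limiting variance $\vartheta\sigma^2/(4H^2)$ visibly carries the $1/p'(\vartheta)^2$ factor, confirming it cannot disappear. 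For what it is worth, the paper's own one-line proof (``the Delta method three times'' applied to its expansion in $1/\bar{\vartheta}_T$) also produces $\sigma_0^2/(\vartheta^2p'(\vartheta)^2)$ when carried out correctly, so the mismatch is between the argument and the displayed constant, not between your route and the paper's; but as a proof of the statement as written, your final step fails.
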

\begin{proof}
The proofs are the same of \cite{HN10} with the equation \eqref{eq3: least square} 
\end{proof}
\begin{remark}
Here for the simulation, we have to use the function of $p^{-1}(\vartheta)$ but this is not an explicit function, the numerical result of the inverse function will be applied in MATLAB.
\end{remark}

\begin{remark}
Since the simulation friendly estimator, $\tilde{\vartheta}_T$, does not contain any stochastic integral and hence it is simpler to simulate. Motivated by Eq. (5.1) in \cite{HN17}, we choose to work with the formula \eqref{eq: practical estimator} by replacing the Riemann integral in the denominator by its corresponding approximate Riemann sums in discrete integer time. Specifically, we define,
\begin{equation}\label{practical estimator discrete}
\tilde{\vartheta}_{N}=p^{-1}\left(\frac{1}{N}\sum_{i=1}^{N} X_{id}^{2} \right)\,,
\end{equation}
where $d>0$ the sampling interval and the process $X_t$ is observed at discrete-time instants $t_{i}=id$, $i=1,2,\ldots,N$.
\end{remark}

\begin{remark}
Let $N\rightarrow\infty$, $d\rightarrow0$ and $H\in(\frac{1}{2},1)$. Borrowing the idea of \cite{HN17} and using Theorem \ref{asym law ergodic}, we can prove the strong consistency and the asymptotic laws for the practical estimator $\tilde{\vartheta}_{n}$ for under some mild conditions.
\end{remark}

\section{Non ergodic case}\label{sec: non ergodic}

When $\vartheta<0$, the process defined in \eqref{msfOU} is obviously non ergodic. However, even we can follow the same approach in \cite{el2016least}, the quadratic variation of $X_t$ is not 0, we can not consider the integral $\int_{0}^{T}X_{t}dX_{t}$ as the Young integral. As presented in the previous ergodic case, we still define it as the Skorohod integral then the following Lemma will  play the key role in this non ergodic situation:
\begin{lem}\label{lemma 4.1}
For $H>1/2$ and $\vartheta<0$, we have
$$
\lim_{T\rightarrow \infty}\frac{H(2H-1)\int_0^T \int_0^t \exp(-\vartheta(t-s))((t-s)^{2H-2}+(t+s)^{2H-2})dsdt}{\int_0^T X_t^2dt}=0
$$
and
$$
\lim_{T\rightarrow \infty}\frac{T}{\int_0^T X_t^2dt}=0.
$$
\end{lem}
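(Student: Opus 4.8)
The plan is to exploit the explosive nature of the msfOUP when $\vartheta<0$: the denominator $\int_0^T X_t^2\,dt$ grows exponentially, whereas both numerators grow strictly more slowly, so the two ratios vanish. Set $\lambda:=-\vartheta>0$. By \eqref{xsolution}, $X_t=e^{\lambda t}\zeta_t$ with $\zeta_t:=\int_0^t e^{-\lambda s}\,d\xi_s$. The first step is to prove that $\zeta_t$ converges almost surely, as $t\to\infty$, to a nondegenerate centered Gaussian variable $\eta:=\int_0^\infty e^{-\lambda s}\,d\xi_s$. Since $\xi=W+S^H$, this splits into the convergence of the martingale part $\int_0^t e^{-\lambda s}\,dW_s$, which is immediate because it is an $L^2$-bounded continuous martingale, and of the sub-fractional part $\int_0^t e^{-\lambda s}\,dS^H_s$, whose $L^2$-limit exists because the variance $\int_0^\infty\!\!\int_0^\infty e^{-\lambda(u+v)}K_H(u,v)\,du\,dv$ is finite; promoting this to almost-sure convergence is done exactly as in the general Gaussian setting of \cite{el2016least}. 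In particular $e^{-2\lambda t}X_t^2=\zeta_t^2\to\eta^2$ a.s., with $\eta\neq0$ a.s.

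The second step converts this into the integrated asymptotics
\begin{equation*}
e^{-2\lambda T}\int_0^T X_t^2\,dt \xrightarrow{a.s.} \frac{\eta^2}{2\lambda}=\frac{\eta^2}{-2\vartheta},
\end{equation*}
which drives everything. Writing $\int_0^T X_t^2\,dt=\int_0^T e^{2\lambda t}\zeta_t^2\,dt$ and applying L'H\^{o}pital's rule to the ratio of this quantity and $e^{2\lambda T}$ (whose derivatives are $e^{2\lambda T}\zeta_T^2$ and $2\lambda e^{2\lambda T}$) reduces the claim to $\zeta_T^2\to\eta^2$; equivalently one may invoke an exponentially weighted Toeplitz lemma. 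In particular the denominator is eventually bounded below by $c\,e^{2\lambda T}$ with $c>0$ random.

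Given this, the two limits follow. For the linear numerator, write $\frac{T}{\int_0^T X_t^2\,dt}=\frac{T e^{-2\lambda T}}{e^{-2\lambda T}\int_0^T X_t^2\,dt}$; the numerator tends to $0$ and the denominator to $\eta^2/(2\lambda)>0$, so the ratio is $0$ a.s. For the first limit I must estimate the deterministic quantity $N(T):=H(2H-1)\int_0^T\int_0^t e^{\lambda(t-s)}\big((t-s)^{2H-2}+(t+s)^{2H-2}\big)\,ds\,dt$. Substituting $u=t-s$ and using $2H-2\in(-1,0)$, a Laplace-type estimate gives $\int_0^t e^{\lambda u}u^{2H-2}\,du=O\big(t^{2H-2}e^{\lambda t}\big)$, and the $(t+s)^{2H-2}$ term is bounded by $t^{2H-2}(e^{\lambda t}-1)/\lambda$ since $(t+s)^{2H-2}\le t^{2H-2}$; integrating once more in $t$ yields $N(T)=O\big(T^{2H-2}e^{\lambda T}\big)$. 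Hence $e^{-2\lambda T}N(T)=O\big(T^{2H-2}e^{-\lambda T}\big)\to0$, and dividing by the denominator of exact order $e^{2\lambda T}$ gives $\frac{N(T)}{\int_0^T X_t^2\,dt}\to0$ a.s.

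The main obstacle is the first step, namely the rigorous almost-sure convergence of $\zeta_t$ and hence of $e^{-2\lambda t}X_t^2$: unlike the Brownian case, $\zeta_t$ is a Wiener integral against the non-stationary, non-Markovian process $\xi$, so one cannot rely on a martingale convergence theorem for the sub-fractional component and must instead control it through the kernel $K_H$ (or the Brownian representation of $S^H$) together with a Gaussian almost-sure convergence criterion, following \cite{el2016least}. By contrast, the Laplace asymptotics for $N(T)$ are routine, the only mild subtlety being the $(t+s)^{2H-2}$ term, for which the crude bound above already suffices because only the exponential growth exponent matters.
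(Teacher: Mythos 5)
Your proof is correct, and its core strategy coincides with the paper's: normalize by $e^{2\vartheta T}$, show that $e^{2\vartheta T}\int_0^T X_t^2\,dt$ converges almost surely to a strictly positive random variable (the paper's \eqref{eq32}), and dispose of both numerators because their growth --- at most a polynomial factor times $e^{-\vartheta T}$ --- cannot compete with $e^{-2\vartheta T}$. Where you genuinely differ is in the decomposition used to make the argument of \cite{el2016least} apply to msfBm. The paper keeps $\xi$ whole and cites Lemma 2.1 of \cite{el2016least}, whose hypothesis $\mathbf{E}\xi_t^2\le c\,t^{\gamma}$ fails for a single $\gamma$ on all of $[0,\infty)$ (the variance of $\xi_t$ mixes the exponents $1$ and $2H$), repairing this by splitting the time axis into $[0,1]$ and $[1,\infty)$; the limit is then expressed through the pathwise integral $Z_\infty=\lim_t\int_0^t e^{\vartheta s}\xi_s\,ds$. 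You instead split the process, $\xi=W+S^H$: the Brownian part of $\zeta_t=\int_0^t e^{\vartheta s}d\xi_s$ is an $L^2$-bounded martingale and converges a.s.\ for free, only the sub-fractional part needs a Gaussian argument, and you pass to the integrated statement by L'H\^{o}pital/Toeplitz. The two limits agree via integration by parts: $\eta=-\vartheta Z_\infty$, so $\eta^2/(-2\vartheta)=-\tfrac{\vartheta}{2}Z_\infty^2$. One caveat on your hard step: the a.s.\ convergence proved in \cite{el2016least} concerns the pathwise integral $Z_t$, not the Wiener integral $\zeta_t$, so promoting your $L^2$ limit of the sub-fractional part to an a.s.\ limit should itself be routed through that integration by parts (the boundary term $e^{\vartheta t}S_t^H\to 0$ a.s.\ by the polynomial growth of $S^H$); as written this is asserted rather than proved, though it is standard and no worse than the paper's own level of detail. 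On the other side of the ledger, your explicit Laplace estimate $N(T)=O\bigl(T^{2H-2}e^{-\vartheta T}\bigr)$, using $(t+s)^{2H-2}\le t^{2H-2}$, is a concrete improvement over the paper, which merely declares the corresponding limit \eqref{eq34} ``easy to check''.
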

\begin{proof}
Although for $0\leq t \leq T$, $c>0$ and $\gamma>0$, the condition of the msfBm $\xi_t: \mathbf{E}\xi_t^2\leq c t^{\gamma}$ is not satisfied, we can divide the process, $\xi_t$, into two parts: one is on the interval $[0,1]$ and the other is on interval $[1,\infty]$. For any interval, this condition is satisfied and the proof for Lemma 2.1 in \cite{el2016least} can be achieved by these two parts. Thus we obtain
\begin{equation}\label{eq32}
\lim_{T\rightarrow \infty} e^{2\vartheta T}\int_0^T X_t^2dt=-\frac{\vartheta}{2}Z_{\infty}^2,
\end{equation}
where $Z_t:=\int_0^t e^{\vartheta s}\xi_sds,\, t\geq 0$ and $Z_t\rightarrow Z_{\infty}$ almost surely in $L^2(\Omega)$.

Moreover, when $\vartheta<0$, it is easy to check that
\begin{equation}\label{eq33}
\lim_{T\rightarrow \infty}e^{2\vartheta T}T=0
\end{equation}
and
\begin{equation}\label{eq34}
\lim_{T\rightarrow \infty}e^{2\vartheta T}\int_0^T \int_0^t \exp(-\vartheta(t-s))((t-s)^{2H-2}+(t+s)^{2H-2})dsdt=0\,.
\end{equation}

Combining \eqref{eq32}, \eqref{eq33} with \eqref{eq34}, we obtain the desired
convergence. Thus we complete the proof.
\end{proof}

With this Lemma and from \eqref{eq3: least square}, we will just consider the estimator for $\vartheta<0$ as 
\begin{equation}\label{eq: estimator non ergodic}
\ddot{\vartheta}_T=-\frac{X_T^2}{2\int_0^T X_t^2dt}.
\end{equation}
Following similar steps as \cite{el2016least}, we can obtain the asymptotic consistency and asymptotic law of $\ddot{\vartheta}_T$.
\begin{thm}
Let $\vartheta<0$ and $H>1/2$. As $T\rightarrow \infty$, the estimator in \eqref{eq: estimator non ergodic} is strong consistency and asymptotical Cauchy
$$
e^{-\vartheta T}(\ddot{\vartheta}_T-\vartheta)  \stackrel{\mathcal{L}}{\longrightarrow} -\frac{2}{\vartheta}\mathcal{C}(1),
$$
where $\mathcal{C}(1)$ is a standard Cauchy distribution with the probability density function $\frac{1}{\pi(1+x^2)}$.
\end{thm}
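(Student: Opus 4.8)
The plan is to treat the two assertions --- strong consistency and the Cauchy limit law --- separately, both anchored on Lemma \ref{lemma 4.1} and on the auxiliary process $\zeta_t:=\int_0^t e^{\vartheta s}\,d\xi_s$, which by \eqref{xsolution} satisfies $e^{\vartheta t}X_t=\zeta_t$. First I would record the almost sure limit $e^{\vartheta t}X_t\to\zeta_\infty$. Integrating by parts gives $\zeta_t=e^{\vartheta t}\xi_t-\vartheta Z_t$ with $Z_t=\int_0^t e^{\vartheta s}\xi_s\,ds$; since $\vartheta<0$ and $\mathbf{E}\xi_t^2=t+R_H(t,t)$ grows only polynomially we have $e^{\vartheta t}\xi_t\to0$ a.s., while $Z_t\to Z_\infty$ a.s. by the argument already used inside the proof of Lemma \ref{lemma 4.1}. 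Hence $\zeta_t\to\zeta_\infty:=-\vartheta Z_\infty$ a.s., so $e^{2\vartheta T}X_T^2\to\zeta_\infty^2$. Combining this with \eqref{eq32}, which reads $e^{2\vartheta T}\int_0^T X_t^2\,dt\to-\tfrac{\vartheta}{2}Z_\infty^2=-\tfrac{1}{2\vartheta}\zeta_\infty^2$, and writing the estimator \eqref{eq: estimator non ergodic} as a ratio of these two normalised quantities, the $e^{2\vartheta T}$ factors and the random factor $\zeta_\infty^2$ cancel, giving $\breve{\vartheta}_T\to\vartheta$ almost surely. This settles strong consistency.

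For the limit law I would first reduce the centred and rescaled estimator to a single ratio. Using the Young change-of-variables formula $X_T^2=2\int_0^T X_t\,dX_t$, valid for $H>1/2$, together with \eqref{msfOU}, one obtains $\int_0^T X_t\,dX_t+\vartheta\int_0^T X_t^2\,dt=\int_0^T X_t\,d\xi_t$, the last integral being the Young integral; hence
\[
e^{-\vartheta T}\bigl(\breve{\vartheta}_T-\vartheta\bigr)=-\frac{e^{\vartheta T}\int_0^T X_t\,d\xi_t}{e^{2\vartheta T}\int_0^T X_t^2\,dt}.
\]
By \eqref{eq32} the denominator converges almost surely to $-\tfrac{1}{2\vartheta}\zeta_\infty^2$, so everything reduces to identifying the limit law of the numerator $N_T:=e^{\vartheta T}\int_0^T X_t\,d\xi_t$.

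The core of the argument is to show that $N_T$ converges stably, jointly with $\zeta_\infty$, to $\zeta_\infty\cdot\mathcal{N}$, where $\mathcal{N}$ is a centred Gaussian independent of $\zeta_\infty$. The natural route is the substitution $t\mapsto T-v$: inserting $X_t=e^{\vartheta t}\zeta_t$ and $d\xi_t=e^{-\vartheta t}d\zeta_t$ and changing variables, the exploding factor $e^{-\vartheta T}$ cancels and $N_T$ becomes an integral of $e^{\vartheta v}\zeta_{T-v}$ against the increments of $\xi$ over a window pinned at the top, so that only the behaviour of $\xi$ near time $T$ survives. Replacing $\zeta_{T-v}$ by its a.s. limit $\zeta_\infty$ isolates the factor $\zeta_\infty$, and the residual functional is linear in the ``recent increments'' of $\xi$. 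Here I would exploit that, although $S^H$ has non-stationary increments, the cross term $(s+t)^{2H-2}$ in the kernel \eqref{K H} decays as $s,t\to\infty$, so the increments of $\xi$ over $[T-v,T]$ converge in law to those of a genuine fBm and become asymptotically independent of $\zeta_\infty$ (whose mass, carried by the weight $e^{\vartheta s}$, concentrates near the origin). This produces the product structure $\zeta_\infty\cdot\mathcal{N}$, whence
$e^{-\vartheta T}(\breve{\vartheta}_T-\vartheta)\to 2\vartheta\,\mathcal{N}/\zeta_\infty$, a ratio of independent centred Gaussians and therefore a scaled standard Cauchy variable; evaluating the two variances and simplifying yields the stated $-\tfrac{2}{\vartheta}\mathcal{C}(1)$. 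The integrability and the validity of the Young/Skorohod manipulations would be checked via conditions \eqref{con1}--\eqref{con2} and the splitting of $\xi$ onto $[0,1]$ and $[1,\infty)$, exactly as in the scheme of \cite{el2016least}.

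The hard part will be this last step: proving the stable convergence of $N_T$ and, within it, the asymptotic independence between the recent-increment Gaussian factor $\mathcal{N}$ and the limit $\zeta_\infty$. Because $S^H$ lacks stationary increments, the clean stationarity argument that is available for fBm in \cite{el2016least} does not transfer directly; one must quantify the decay of the non-stationary part of the covariance and control, uniformly against the weight $e^{\vartheta v}$, the error incurred in replacing $\zeta_{T-v}$ by $\zeta_\infty$. It is precisely here that the time-domain splitting used in Lemma \ref{lemma 4.1} does the genuine work.
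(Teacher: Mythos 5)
Your route is the same one the paper takes: its entire proof is a one-sentence deferral to \cite{el2016least} (split $\xi=W+S^{H}$ into its independent parts and rerun that argument), and your sketch is essentially a detailed reconstruction of that scheme. The consistency half --- $\zeta_t=e^{\vartheta t}X_t$, the limit \eqref{eq32}, and cancellation of the random factor $\zeta_\infty^{2}$ in the ratio --- is correct. One smaller repair is needed in the reduction step: the identity $X_T^2=2\int_0^T X_t\,dX_t$ is \emph{not} ``valid for $H>1/2$'' here, because $X$ inherits the Wiener component of $\xi$, so its paths are only H\"older of order $1/2-\epsilon$ (Young integration needs exponents summing to more than $1$) and its quadratic variation is $[X]_T=T\neq 0$; the correct pathwise formula is the F\"ollmer--It\^o one, $X_T^2=2\int_0^T X_t\,d^-X_t+T$. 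The extra $T$ is killed by $e^{\vartheta T}$ (this is exactly the content of Lemma \ref{lemma 4.1}), so the reduction survives, but as written the step is false --- which is presumably why the paper defines $\breve{\vartheta}_T$ in \eqref{eq: estimator non ergodic} with no stochastic integral at all.

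The substantive gap is your final sentence: ``evaluating the two variances and simplifying yields the stated $-\tfrac{2}{\vartheta}\mathcal{C}(1)$.'' It does not. Your own scheme gives the limit $2\vartheta\,\mathcal{N}/\zeta_\infty$ with $\mathcal{N}$, $\zeta_\infty$ independent centred Gaussians, hence a Cauchy law with scale $2|\vartheta|\,\sigma_{\mathcal{N}}/\sigma_{\zeta_\infty}$, and for the mixed sub-fractional process these two standard deviations are provably unequal. The recent-increment factor sees only the fBm-like part of the kernel \eqref{K H} (the term $(t+s)^{2H-2}$ vanishes near $t=s=T\to\infty$), so
\begin{equation*}
\sigma_{\mathcal{N}}^{2}=-\frac{1}{2\vartheta}+H\Gamma(2H)(-\vartheta)^{-2H},
\end{equation*}
whereas $\zeta_\infty$ is carried by the increments near the origin, where the full sub-fractional kernel acts, and a direct computation (using $\alpha_H\int_0^\infty\int_0^\infty e^{\vartheta(u+v)}(u+v)^{2H-2}\,du\,dv=\alpha_H\Gamma(2H)(-\vartheta)^{-2H}$) gives
\begin{equation*}
\sigma_{\zeta_\infty}^{2}=-\frac{1}{2\vartheta}+2H(1-H)\Gamma(2H)(-\vartheta)^{-2H}<\sigma_{\mathcal{N}}^{2}
\quad\text{for every } H\in(1/2,1).
\end{equation*}
In \cite{el2016least} and in the fBm case the equality $\sigma_{\mathcal{N}}=\sigma_{\zeta_\infty}$ comes from stationarity of increments via the time reversal $t\mapsto T-t$; that is precisely what fails for sfBm. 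So the non-stationarity is not merely a technical nuisance blocking the asymptotic-independence step, as your last paragraph suggests: it changes the limiting constant itself. A careful completion of your argument yields a Cauchy limit with scale $2|\vartheta|\,\sigma_{\mathcal{N}}/\sigma_{\zeta_\infty}$, which is neither the theorem's $-\tfrac{2}{\vartheta}$ nor the value $-2\vartheta$ that a verbatim transfer of \cite{el2016least} would produce; your proposal cannot ``simplify'' to the stated constant, it contradicts it, and this discrepancy (glossed over equally by the paper's one-line proof) is the real mathematical content that any complete proof of this theorem must resolve.
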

\begin{proof}
The proof for this theorem is almost the same as in \cite{el2016least} which only needs to divide the independent part of sfBm and the standard Brownian motion.
\end{proof}

\section{Simulation study}\label{sec: simulation}

In Section \ref{preliminaries} we have introduced the method of the simulation of sub-fractional Brownian motion and in this part we will provide an algorithm for the estimating the drift parameter for msfOUP by Monte Carlo simulation:
\begin{itemize}

\item  Set $X_{0}=0$ and simulate the observations $X_{d},\ldots, X_{Nd}$ for different values of $H$ and $\vartheta$.  Here, we approximate the msfOUP by the Euler scheme:
\begin{equation}\label{xt simulation}
X_{(i+1)d}=X_{id}-\vartheta d X_{id}+\left(S_{(i+1)d}^{H}-S^{H}_{id}\right)+\left(W_{(i+1)d}-W_{id}\right), \quad i=0,\ldots, N.
\end{equation}
For each case, we simulate $l=10000$ paths.

\item  Obtain the practical estimator of (\ref{practical estimator discrete}), by solving the equation $\frac{1}{N}\sum_{i=1}^{N} X_{id}^{2}=\vartheta^{-2H}H\Gamma(2H)+\frac{1}{2\vartheta}$, numerically.

\end{itemize}

Now, setting $\vartheta=0.3$, $T=16$, $d=1/2^8$ and $X_0=0$, we simulate some paths of msfOUP with different Hurst parameters ($H=0.52, 0.62, 0.72$). The simulation paths reflex the main property of msfOUP: a large value of $H$ corresponds to a smoother sample path. In other words, for smaller values of $H$, the sample paths of a msfOUP fluctuate more wildly.

\begin{center}
\resizebox{160mm}{80mm}{\includegraphics{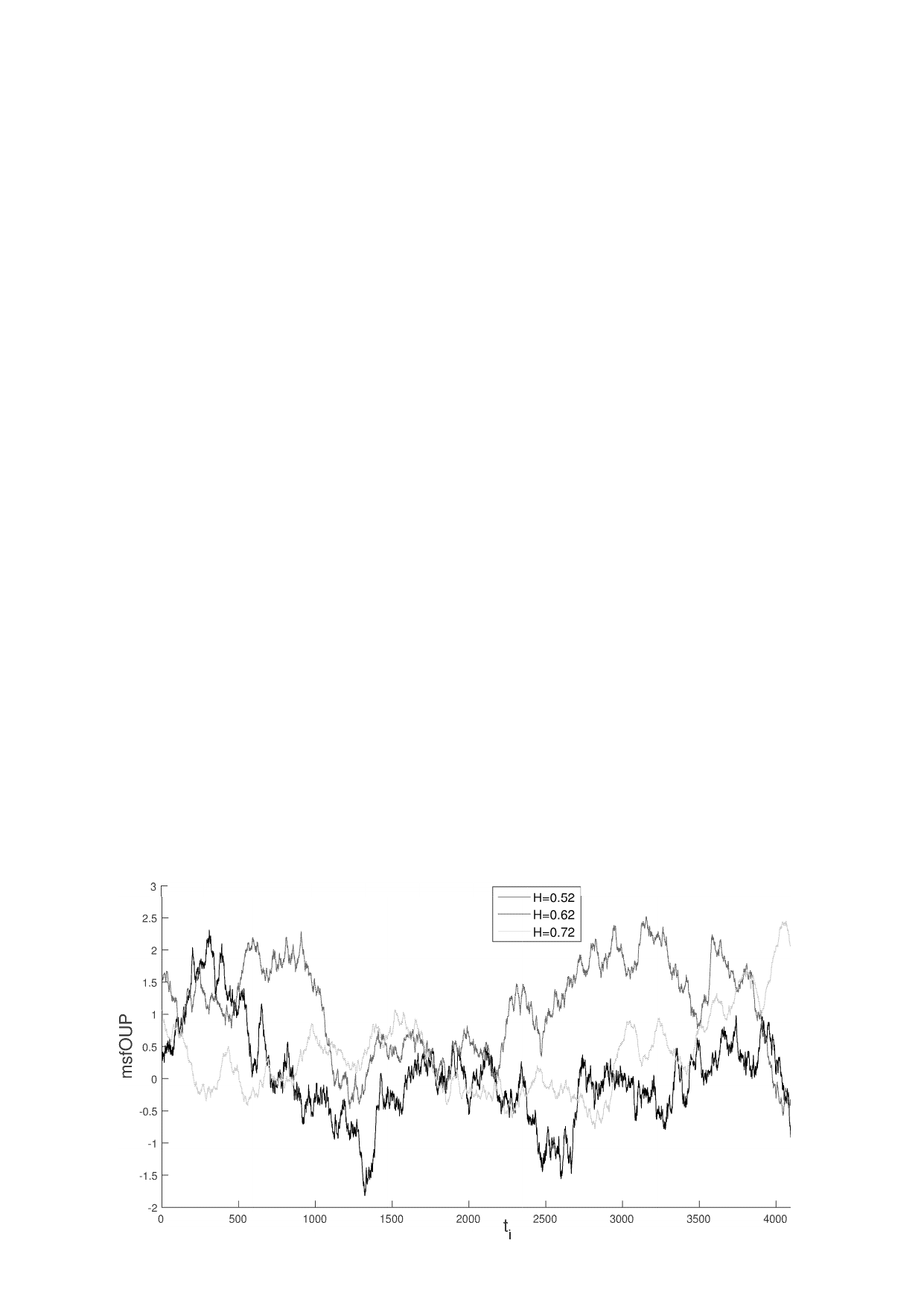}}\\
Fig.1. Generated msfOUP for different value of Hurst parameter.
\end{center}

In what follows, for some fixed sampling intervals $d = 1/12$ (e.g., data collected by monthly observations) and $d = 1/250$ (e.g., data collected by daily observations), we carry out a simulation study proposed above. Then, we obtain the practical estimator $\tilde{\vartheta}_{n}$ using some generating datasets with different sampling size $N$ and different time span $T$. For each case, replications involving $l=10000$
samples are simulated from the true model. The following table reports the mean, the median and standard deviation (S.Dev.) of the practical type estimator proposed by \eqref{practical estimator discrete} for different sample sizes and different time span, where the true values denote the parameter values used in the Monte Carlo simulation.

\begin{center}
\begin{tabular}{ccccccccccc}
\multicolumn{10}{c}{{\bf Table 1} Estimation results with the Hurst parameter $H=0.55$}\\
\toprule%
& True value & 0.1000 & 0.5000 & 1.0000 & 2.0000   & 0.1000 & 0.5000 & 1.0000 & 2.0000 \\
\hline
&& \multicolumn{4}{c}{$d=\frac{1}{12}$}&\multicolumn{4}{c}{$d=\frac{1}{250}$}\\
\cmidrule(l{1em}r){3-6} \cmidrule(l{1em}r){7-10}
\multirow{2}*{$T=10$}
& Mean      & 0.1461 & 0.6582 & 1.2895  & 2.4880   & 0.1223 & 0.5461 & 0.9743 & 2.3435 \\
& Median    & 0.1464 & 0.6783 & 1.3058  & 2.3888   & 0.1173 & 0.5341 & 0.9439 & 2.3639 \\
& S.Dev.    & 0.7544 & 0.8502 & 1.0040  & 0.7787   & 0.7014 & 0.8316 & 0.9814 & 0.7225 \\
\cmidrule(l{1em}r){3-6} \cmidrule(l{1em}r){7-10}
\multirow{2}*{$T=20$}
& Mean      & 0.1286 & 0.5835 & 1.1310  & 2.3596   & 0.1143 & 0.5149 & 1.0282 & 2.0672 \\
& Median    & 0.1324 & 0.5288 & 1.1559  & 2.3092   & 0.1277 & 0.5122 & 1.0448 & 2.0862\\
& S.Dev.    & 0.2358 & 0.3152 & 0.3910  & 0.5065   & 0.3678 & 0.4927 & 0.6155 & 0.5093 \\
\bottomrule
\end{tabular}
\end{center}

\vspace{5mm}
\begin{center}
\begin{tabular}{ccccccccccc}
\multicolumn{10}{c}{{\bf Table 2} Estimation results with the Hurst parameter $H=0.65$}\\
\toprule%
& True value & 0.1000 & 0.5000 & 1.0000 & 2.0000   & 0.1000 & 0.5000 & 1.0000 & 2.0000 \\
\hline
&& \multicolumn{4}{c}{$d=\frac{1}{12}$}&\multicolumn{4}{c}{$d=\frac{1}{250}$}\\
\cmidrule(l{1em}r){3-6} \cmidrule(l{1em}r){7-10}
\multirow{2}*{$T=10$}
& Mean      & 0.1492 & 0.6582 & 1.3698  & 2.6292   & 0.1114 & 0.5739 & 1.1608 & 2.2122 \\
& Median    & 0.1542 & 0.6783 & 1.3015  & 2.6721   & 0.1131 & 0.5647 & 1.1532 & 2.2314 \\
& S.Dev.    & 0.7461 & 0.8502 & 0.6455  & 0.8072   & 0.7182 & 0.7754 & 0.5010 & 0.5717 \\
\cmidrule(l{1em}r){3-6} \cmidrule(l{1em}r){7-10}
\multirow{2}*{$T=20$}
& Mean      & 0.1885 & 0.5844 & 1.1397  & 2.3824   & 0.1025 & 0.5153 & 1.0611 & 1.9846 \\
& Median    & 0.1304 & 0.5888 & 1.1435  & 2.4303   & 0.1044 & 0.5088 & 1.0739 & 2.0764\\
& S.Dev.    & 0.2412 & 0.3258 & 0.4069  & 0.5299   & 0.0991 & 0.1172 & 0.1285 & 0.1445 \\
\bottomrule
\end{tabular}
\end{center}

From numerical computations, we can see that the practical type estimator proposed in this paper performs well for the Hurst parameters $H>\frac{1}{2}$. As is expected, the simulated mean of these estimators converges to the true value rapidly and the simulated standard deviation decreases to zero with a slight positive bias as the sampling interval tends to zero and the time span goes to infinite.

To evidence the asymptotic laws of $\tilde{\vartheta}_{N}$, we next investigate the asymptotic distributions of $\tilde{\vartheta}_{N}$. Thus, we focus on the distributions of the following statistics:
\begin{equation}\label{function phi}
\Phi(N,H,\vartheta,d,X)=\frac{\vartheta\sqrt{Nd}}{\sigma_{H}\left(H\Gamma(2H)\vartheta^{1-2H}+\frac{1}{2}
\right)}\left(\tilde{\vartheta}_{N}-\vartheta\right)\,.
\end{equation}

Here, the chosen parameters are $\vartheta$=0.1, $H$=0.618 and we take $T$=16 and $h=\frac{1}{250}$.  We perform 10,000 Monte Carlo simulations of the sample paths generated by the process of \eqref{xt simulation}. The results are presented in the following Figure and Table 3.

\begin{center}
\resizebox{160mm}{80mm}{\includegraphics{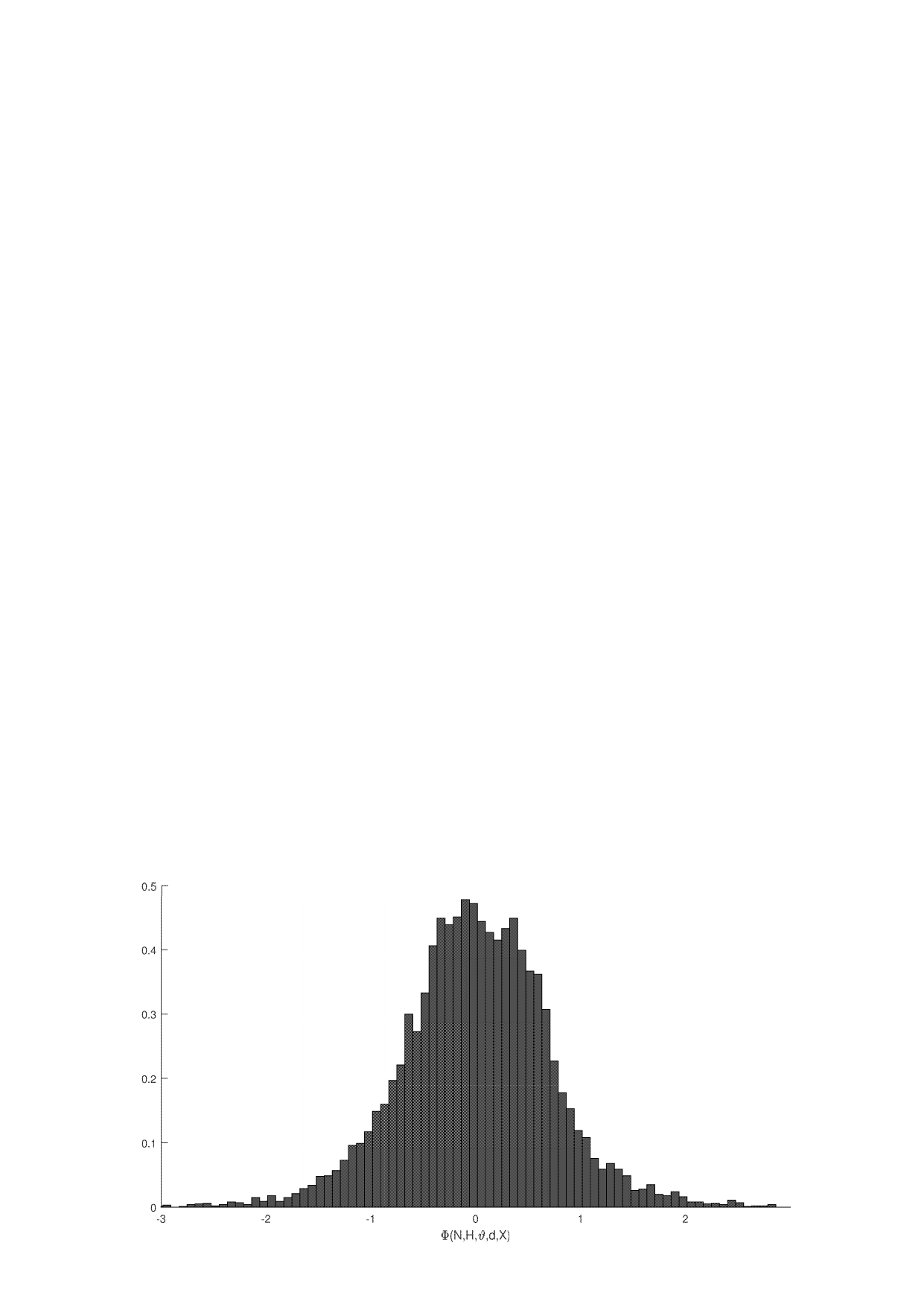}}\\
Fig.2. Histogram of the statistic $\Phi(N,H,\vartheta,d,X)$.
\end{center}

\begin{center}
\begin{tabular}{cccccccccccc}
\multicolumn{6}{c}{Table 3.  The comparisons of statistical
properties between $\Phi(N,H,\vartheta,d,X)$ and $\mathcal {N}$(0,1).}\\
\toprule%
Statistics  & Mean  & Median & Standard Deviation  & Skewness  & Kurtosis  \\
\hline
$\mathcal {N}$(0,1)            &  0          &0        & 1        & 0       &      3    \\
$\Phi(N,H,\vartheta,d,X)$ & 0.0003419 & 0.0716 & 0.000011648 &0.0246  &4.5534 \\
\bottomrule
\end{tabular}
\end{center}

The histogram indicates that the normal approximation of the distribution of the statistic $\Phi(N,H,\vartheta,d,X)$ is reasonable even when sampling size $N$ is not so large. From Table 3, we can see that the empirical mean, standard deviation, skewness and kurtosis are close to their asymptotic counterparts, which confirms our theoretical analysis: the convergence of the distribution of $\Phi(N,H,\vartheta,d,X)$ is fast. Thus, the density plot of the simulation results is close to the kernel of the limiting distribution of $\Phi(N,H,\vartheta,d,X)$ proposed by \eqref{function phi} when
$H =0.618$. For $H>\frac{3}{4}$, the limiting distribution, known as Rosenblatt distribution, is not known to have a closed form. Readers who are interested in the density plot of Rosenblatt random variable are referred to \cite{veillette2013properties} and the references therein.

\section{Appendix}

\subsection{Proof of Main Theorem}

In this part we will prove the main results of Theorem \ref{th: conv pr} and Theorem \ref{th: asym}. First of all, let us introduce the following Lemma.
\begin{lem}\label{cov distant}
Let $S_t^H$ be a sub-fractional Brownian motion. Then, we have
\begin{equation}\label{eq: add1}
\mathbf{E}\left[\int_s^T e^{-\vartheta(\xi-s)}dS_{\xi}^H\int_t^Te^{-\vartheta(\eta-t)}dS_{\eta}^H\right]\leq C_{\vartheta, H}|t-s|^{2H-2}
\end{equation}
and
\begin{equation}\label{eq: add2}
\mathbf{E}\left[\int_0^t e^{-\vartheta(t-u)}dS_u^H \int_0^s e^{-\vartheta(s-v)}dS_v^H\right]\leq C_{\vartheta, H}|t-s|^{2H-2}.
\end{equation}
\end{lem}
\begin{proof}
In fact
$$
\mathbf{E}\left[\int_s^T e^{-\vartheta(\xi-s)}dS_{\xi}^H\int_t^Te^{-\vartheta(\eta-t)}dS_{\eta}^H\right]=\alpha_H\int_t^T\int_s^T e^{-\vartheta(\xi-s)}e^{-\vartheta(\eta-t)}\left(|\xi-\eta|^{2H-2}-|\eta+\xi|^{2H-2}\right)d\xi d\eta
$$
when for fixed real number $t,s\geq 0$, we have $|t+s|^{2H-2}\leq |t-s|^{2H-2}$ we have 
$$
0<\mathbf{E}\left[\int_s^T e^{-\vartheta(\xi-s)}dS_{\xi}^H\int_t^Te^{-\vartheta(\eta-t)}dS_{\eta}^H\right]\leq 2 \alpha_H\int_t^T\int_s^T e^{-\vartheta(\xi-s)}e^{-\vartheta(\eta-t)}|\xi-\eta|^{2H-2}d\xi d\eta$$
From the web only Lemma 5.4 of \cite{HN10} we know
$$
 \alpha_H\int_t^T\int_s^T e^{-\vartheta(\xi-s)}e^{-\vartheta(\eta-t)}|\xi-\eta|^{2H-2}d\xi d\eta\leq C_{\vartheta, H}|t-s|^{2H-2}
 $$
 which achieves the proof of \eqref{eq: add1} and the same for \eqref{eq: add2}.
\end{proof}
The following Lemma plays key role in the proof of Theorem \ref{th: asym}  when in the norm $\|\cdot\|_{\mathcal{H}}$ we have to calculate the inner product with respect to the standard Brownian motion.
\begin{lem}\label{s s distance}
For $H>1/2$, we have
\begin{equation}\label{eq: s s distance}
\lim_{T\rightarrow \infty}\frac{1}{T}\int_0^T \mathbf{E}\left(\int_0^s e^{-\vartheta(s-u)}dB_u^H\int_s^T e^{-\vartheta(v-s)}dB_v^H      \right)ds=\vartheta^{-2H}\Gamma(2H)\,.
\end{equation}
\end{lem}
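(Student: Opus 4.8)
The plan is to reduce the statement to a deterministic estimate. Both $\int_0^s e^{-\vartheta(s-u)}dB_u^H$ and $\int_s^T e^{-\vartheta(v-s)}dB_v^H$ are Wiener integrals against the fractional noise, so by the covariance identity associated with the kernel $K_H$ of \eqref{K H} one has
$$\mathbf{E}\left(\int_0^s e^{-\vartheta(s-u)}dB_u^H\int_s^T e^{-\vartheta(v-s)}dB_v^H\right)=\int_0^s\int_s^T e^{-\vartheta(s-u)}e^{-\vartheta(v-s)}K_H(u,v)\,du\,dv,$$
where $u\le s\le v$ forces $|u-v|=v-u$. I would then substitute $a=s-u\in[0,s]$ and $b=v-s\in[0,T-s]$, after which the integrand depends on the noise only through $(a+b)^{2H-2}$ (from the $|u-v|^{2H-2}$ part) and, in the sub-fractional case, the correction $(2s-a+b)^{2H-2}$ (from $-(u+v)^{2H-2}$), both multiplied by the weight $e^{-\vartheta(a+b)}$, which localizes $a$ and $b$ near the origin.

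Next I would split the resulting double integral into the sub-fractional correction and the leading piece and control each after applying the outer $\frac1T\int_0^T(\cdot)\,ds$. For the correction, the factor $(2s-a+b)^{2H-2}$ is of size $s^{2H-2}$ on the region where $e^{-\vartheta(a+b)}$ is not negligible, so that contribution is at most $O\!\left(\frac1T\int_0^T s^{2H-2}\,ds\right)=O(T^{2H-2})$, which tends to $0$ for every $H<1$. The remaining piece is $\frac1T\int_0^T\big(\int_0^s\int_0^{T-s}e^{-\vartheta(a+b)}(a+b)^{2H-2}\,db\,da\big)ds$, and here the decorrelation estimates of Lemma \ref{cov distant}, namely the bound $\mathbf{E}[\cdots]\le C_{\vartheta,H}|t-s|^{2H-2}$ for blocks based at separated times, are the natural tool: they quantify how far-apart contributions decay and let one dominate everything away from the diagonal.

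The step I expect to be the main obstacle is controlling the near-diagonal regime, where the past block (localized just below $s$) and the future block (localized just above $s$) nearly coincide. There $(a+b)^{2H-2}$ is merely integrable, not small, so the inner double integral does not decay pointwise in $s$ and the vanishing of the time-average cannot be read off from smallness of the integrand; it must instead be produced by a genuine cancellation. My plan here is to pass to the two-sided representation $S^H=\frac1{\sqrt2}(B_\cdot^H+B_{-\cdot}^H)$ of \eqref{eq: sum fBm}, isolate the stationary past--future contribution, and try to show the remaining part is $O(s^{2H-2})$ uniformly on $[0,T]$. Since it is precisely this leading, non-decaying part of the past--future covariance that must be shown to average to zero, I expect carrying the cancellation through uniformly enough to survive $\frac1T\int_0^T(\cdot)\,ds$ to be the delicate heart of the argument, and the place where the sub-fractional structure, as opposed to ordinary fBm, is indispensable.
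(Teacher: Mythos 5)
Your reduction and your diagnosis of where the difficulty sits are both correct, but the gap you leave open cannot be closed: the ``genuine cancellation'' you hope to extract does not exist, because the statement as written is false. Note first that the lemma concerns the fBm $B^H$, not the sfBm, so only the kernel $H(2H-1)|u-v|^{2H-2}$ enters the covariance identity; the correction kernel $(u+v)^{2H-2}$ --- the only piece your argument fully controls, via the $O(T^{2H-2})$ bound --- is simply absent. After your substitution $a=s-u$, $b=v-s$, the expectation equals
\begin{equation*}
H(2H-1)\int_0^{s}\int_0^{T-s}e^{-\vartheta(a+b)}(a+b)^{2H-2}\,db\,da\,,
\end{equation*}
a strictly positive quantity, increasing in both $s$ and $T-s$, which converges as $s,\,T-s\to\infty$ (monotone convergence, then the change of variable $r=a+b$) to
\begin{equation*}
H(2H-1)\int_0^\infty e^{-\vartheta r}r^{2H-1}\,dr=H(2H-1)\Gamma(2H)\vartheta^{-2H}>0\,.
\end{equation*}
Since the integrand of the Ces\`aro average is nonnegative, bounded by this constant, and exceeds $H(2H-1)\int_0^M\int_0^M e^{-\vartheta(a+b)}(a+b)^{2H-2}\,db\,da$ for every $s\in[M,T-M]$, the left-hand side of \eqref{eq: s s distance} converges to $H(2H-1)\Gamma(2H)\vartheta^{-2H}$, not to $0$. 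This is exactly the long-range positive dependence of fBm for $H>1/2$: the past and future blocks joined at $s$ remain correlated uniformly in $s$, and since every contribution has the same sign, no two-sided representation or sub-fractional structure can produce cancellation. The decorrelation bounds of Lemma \ref{cov distant} are likewise of no help, since they only control blocks based at \emph{separated} times, and here the separation is zero.

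For comparison, the paper's own proof commits precisely the fallacy you warned against: after rescaling $u=Tx$, $v=Ty$, $s=Tz$, it observes that $T^{2H}e^{-\vartheta T(y-x)}(y-x)^{2H-2}\to0$ pointwise for fixed $y-x>0$ and appeals to L'H\^opital's rule, ignoring the strip $y-x=O(1/T)$ along the diagonal where all the mass concentrates; integrating over that strip reproduces exactly the positive constant above. So your identification of the near-diagonal regime as ``the delicate heart'' is sharper than the published argument, but the honest conclusion is that the lemma must be restated with limit $H(2H-1)\Gamma(2H)\vartheta^{-2H}$ (it does hold with limit $0$ if the kernel is replaced by the sub-fractional correction $(u+v)^{2H-2}$ alone, by your $O(T^{2H-2})$ estimate). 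Correspondingly, the claim in the proof of Theorem \ref{th: asym} that the second and third terms of \eqref{eq45} vanish is also incorrect. The theorem itself is repairable: the fourth moment theorem only requires $\|DF_T\|_{\mathcal{H}}^2$ to converge in $L^2$ to a deterministic constant, and the nonvanishing term in \eqref{eq45} is exactly the squared mean of $A_T^{(2)}$, which contributes to that constant but not to the variance, so $L^2$-convergence to a constant still holds.
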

\begin{proof}
\begin{eqnarray*}
\int_0^T \mathbf{E}\left(\int_0^s e^{-\vartheta(s-u)}dB_u^H\int_s^T e^{-\vartheta(v-s)}dB_v^H      \right)ds&=& \int_0^T\int_s^T\int_0^s e^{-\vartheta(y-x)}(y-x)^{2H-2}dxdyds\\
&=&\int_0^T\int_0^y\int_x^y e^{-\vartheta(y-x)}(y-x)^{2H-2}dsdxdy\\
&=&\int_0^T\int_0^ye^{-\vartheta (y-x)}(y-x)^{2H-1}dxdy\\
&=&\int_0^T\int_0^ye^{-\vartheta x}x^{2H-1}dxdy=\int_0^T\int_x^T e^{-\vartheta x}x^{2H-1}dydx\\
&=&\int_0^Te^{-\vartheta x}x^{2H-1}(T-x)dx\\
&=&T\int_0^T e^{-\vartheta x} x^{2H-1}dx-\int_0^Te^{-\vartheta x}x^{2H}dx
\end{eqnarray*}
The two limits 
$$
\int_0^{\infty}e^{-\vartheta x}x^{2H-1}dx=\vartheta^{-2H}\Gamma(2H),\,\,\,\,\,\, \lim_{T\rightarrow \infty}\frac{1}{T}\int_0^T e^{-\vartheta x}x^{2H}dx=0
$$
complete the proof.

\end{proof}

\subsubsection{Proof of Theorem \ref{th: conv pr}}

The result in \cite{HN10} gives the strong consistency for the LSE from the ergodicity. Since the increment of the sfBm is not stationary, we can not use the ergodicity to prove the consistency of $\bar{\vartheta}_T$. Now, a standard calculation yields
\begin{eqnarray*}
&&\lim_{T\rightarrow \infty} \frac{H(2H-1)}{T}\int_0^T\int_0^t\exp(-\vartheta(t-s))(t-s)^{2H-2}dsdt \\ &=&\lim_{T\rightarrow \infty}\frac{H(2H-1)}{T}\int_0^T\int_0^t u^{2H-2}e^{-\vartheta u}dudt\\
&=&\vartheta^{1-2H}H\Gamma(2H).
\end{eqnarray*}

On the other hand, a straightforward calculation shows that
\begin{eqnarray*}
\int_0^T\int_0^t \exp(-\vartheta(t-s))(t+s)^{2H-2}ds&=&\int_0^T\int_0^t \exp (\vartheta(s+t)-2\vartheta t)(t+s)^{2H-2}dsdt\\
&=&\int_0^T\exp(-2\vartheta t)\int_t^{2t}\exp(\vartheta u)u^{2H-2}dudt\,.
\end{eqnarray*}

With the L'H\^{o}spital's rule, we have
$$\lim_{T\rightarrow \infty}\frac{1}{T}\int_0^T\exp(-2\vartheta t)\int_t^{2t}\exp(\vartheta u)u^{2H-2}dudt=0.$$

Now we only need to prove that
\begin{equation}\label{eq: for practical}
\frac{1}{T}\int_0^TX_t^2dt \stackrel{\textit{a.s.}}{\longrightarrow}\frac{1}{2\vartheta}+H\vartheta^{-2H}\Gamma(2H).
\end{equation}

For $0\leq t\leq T$, let $W_t$ be a standard Brownian motion and $X_t=X_t^{(1)}+X_t^{(2)}+X_t^{(3)}$. Then, we have
$$
dX_t^{(1)}=-\vartheta X_t^{(1)}dt+dW_t,\, 0\leq t \leq T,
$$
and
$$
dX_t^{(2)}=-\vartheta X_t^{(2)}dt+\frac{1}{\sqrt{2}}dB_t^H,\, 0\leq t\leq T,
$$
$$
dX_t^{(3)}=-\vartheta X_t^{(3)}dt+\frac{1}{\sqrt{2}}dB_{-t}^H,\, 0\leq t \leq T.
$$
With the ergodic property of these three processes presented in \cite{CKM03}, we have the following results:
$$
\frac{1}{T}\int_0^T\left(X_t^{(1)}\right)^2dt \stackrel{a.s.}{\longrightarrow}\frac{1}{2\vartheta},
$$
$$
\frac{1}{T}\int_0^T\left(X_t^{(2)}\right)^2dt \stackrel{a.s.}{\longrightarrow}\frac{H}{2}\vartheta^{-2H}\Gamma(2H)
$$
and
$$
\frac{1}{T}\int_0^T\left(X_t^{(3)}\right)^2dt \stackrel{a.s.}{\longrightarrow}\frac{H}{2}\vartheta^{-2H}\Gamma(2H)
$$
so \eqref{eq: for practical} needs this result:
\begin{equation}\label{eq: 1 and 2 as}
\frac{1}{T}\int_0^T X_t^{(2)} X_t^{(3)}dt \stackrel{a.s.}{\longrightarrow} 0.
\end{equation}
First of all 
\begin{equation}
\mathbf{E}X_1^{(1)}X_t^{(2)}= \int_0^t \int_0^t e^{-\vartheta (t-s)}e^{-\vartheta (t-u)}|u+s|^{2H-2}duds
\end{equation}
when $|u+s|^{2H-2}\leq (us)^{H-1}$ for $u,s\geq 0$, then with L'H\^opital rule we have the following inequality:
\begin{equation}\label{eq: control 1 and 2}
\mathbf{E}X_1^{(1)}X_t^{(2)}\leq C_{\vartheta, H}\left(1 \wedge t^{2H-2} \right). 
\end{equation}
With this inequality, we can easily obtain the convergence in probability 
$$
\frac{1}{T}\int_0^T X_t^{(2)} X_t^{(3)}dt \stackrel{\mathbf{P}}{\longrightarrow} 0.
$$
From \eqref{eq: control 1 and 2}, Borel-cantelli Lemma we can easily obtain the convergence almost surely in \eqref{eq: 1 and 2 as} for $1/2<H<3/4$. For the case $3/4\leq H<1$, we will apply the method of The Theorem 2.1 in \cite{CHW17}. In fact with the convergence in probability means there exists an sub-sequence which convergents almost surely to 0, on the other hand,  the equation \eqref{eq: control 1 and 2} verifies the condition of second order Winer-Ito chaos in Proposition 3.4 of \cite{CHW17}, together with GRR inequality (See Theorem 2.1 in \cite{Hu17}) \eqref{eq: 1 and 2 as} will be achieved.

\subsubsection{Proof of Theorem \ref{th: asym}}

{\bf Step 1:} We shall use Malliavin calculus and the fourth moment theorem (see, for example, Theorem 4 in \cite{NO2008}) to prove (\ref{varthetaclt1}).

In fact, using (\ref{eq:least square}), we have
\begin{equation}\label{eq: pre LSE}
\sqrt{T}\left(\bar{\vartheta}_T-\vartheta\right)=-\frac{\frac{1}{\sqrt{T}}\int_0^T\left(\int_0^t e^{-\vartheta(t-s)}d\xi_s^H\right)d\xi_t^H}{\frac{1}{T}\int_0^T X_t^2dt}=\frac{-F_T}{\frac{1}{T}\int_0^T X_t^2dt}\,,
\end{equation}
where $F_T$ is the double stochastic integral
\begin{equation}\label{eq: dou int}
F_T=\frac{1}{2\sqrt{T}}I_2\left(e^{-\vartheta|t-s|}\right)=\frac{1}{2\sqrt{T}}\int_0^T\int_0^T e^{-\vartheta |t-s|}d\xi_s\xi_t.
\end{equation}

By \eqref{eq: for practical}, we know that $\frac{1}{T}\int_0^T X_t^2dt$ converges in probability and in $L^2$ as $T$ tends to infinity to $\frac{1}{2\vartheta}+H\vartheta^{-2H}\Gamma(2H)$. From Theorem 4 of \cite{NO2008}, we have to check the following two conditions:

\begin{description}
  \item[(i).] $\mathbf{E}(F_T^2)$ converges to a constant as T tends to infinity
$$
\lim_{T\rightarrow \infty} \mathbf{E}F_T^2=\vartheta^{1-4H}H^2(4H-1)\left(\Gamma(2H)^2
+\frac{\Gamma(2H)\Gamma(3-4H)\Gamma(4H-1)}{\Gamma(2-2H)}\right)+\frac{1}{2\vartheta}\,.
$$
  \item[(ii).] $\|DF_T\|_{\mathcal{H}}^2$ converges in $L^2$ to a constant as $T$ tends to infinity.
\end{description}

We first check the condition {\bf (i)}. When $W_t$ and $S_t^H$ are independent, we have $\mathbf{E}F_T^2=\mathbf{E}\left(F_{1,T}^2+F_{T,2}^2\right)$, with
$F_{1,T}=\frac{1}{2\sqrt{T}}\int_0^T\int_0^T e^{-\vartheta |t-s|}dW_tdW_s,\,\, F_{2,T}=\frac{1}{2\sqrt{T}}\int_0^T\int_0^T e^{-\vartheta |t-s|}dS_t^HdS_s^H.$

A standard calculation together with (\ref{fg}) yields
\begin{eqnarray*}
\mathbf{E}F_{2,T}^2&=&\frac{\alpha_H^2}{2T}\int_{[0,T]^4}\exp\left(-\vartheta |u_2-s_2|-\vartheta|u_1-s_1|\right)|u_2-u_1|^{2H-2}|s_2-s_1|^{2H-2}ds_1ds_2du_1du_2\\
&&-\frac{\alpha_H^2}{2T}\int_{[0,T]^4}\exp\left(-\vartheta |u_2-s_2|-\vartheta|u_1-s_1|\right)|u_2-u_1|^{2H-2}|s_2+s_1|^{2H-2}ds_1ds_2du_1du_2\\
&&-\frac{\alpha_H^2}{2T}\int_{[0,T]^4}\exp\left(-\vartheta |u_2-s_2|-\vartheta|u_1-s_1|\right)|u_2+u_1|^{2H-2}|s_2-s_1|^{2H-2}ds_1ds_2du_1du_2\\
&&+\frac{\alpha_H^2}{2T}\int_{[0,T]^4}\exp\left(-\vartheta |u_2-s_2|-\vartheta|u_1-s_1|\right)|u_2+u_1|^{2H-2}|s_2+s_1|^{2H-2}ds_1ds_2du_1du_2\,.
\end{eqnarray*}

From \cite{HN10}, we have
\begin{eqnarray}\label{ef2t first}
&&\lim_{T\rightarrow \infty}\frac{\alpha_H^2}{2T}\int_{[0,T]^4}\exp\left(-\vartheta |u_2-s_2|-\vartheta|u_1-s_1|\right)|u_2-u_1|^{2H-2}|s_2-s_1|^{2H-2}ds_1ds_2du_1du_2 \notag\\
&=&\vartheta^{1-4H}H^2(4H-1)\left(\Gamma(2H)^2+\frac{\Gamma(2H)\Gamma(3-4H)\Gamma(4H-1)}{\Gamma(2-2H)}\right).
\end{eqnarray}

A simple calculation yields
\begin{eqnarray}\label{ef1t}
\lim_{T\rightarrow \infty}\mathbf{E}F_{1,T}^2=\lim_{T\rightarrow \infty}\frac{1}{T}\int_0^T\int_0^te^{-2\vartheta(t-s)}dsdt=\frac{1}{2\vartheta}.
\end{eqnarray}

Now, if we can prove
\begin{equation}\label{eq: Cai}
\lim_{T\rightarrow \infty}\frac{1}{T}\int_{[0,T]^4}\exp\left(-\vartheta |u_2-s_2|-\vartheta|u_1-s_1|\right)|u_2-u_1|^{2H-2}|s_2+s_1|^{2H-2}ds_1ds_2du_1du_2=0\,,
\end{equation}
then the last three terms of $\mathbf{E}F_{2,T}^2$ will tend to zero with the fact $|s_2+s_1|^{2H-2}\leq |s_2-s_1|^{2H-2}$.

Denote
$$
I_T=\frac{1}{T}\int_{[0,T]^4}e^{-\vartheta |u_2-s_2|-\vartheta|u_1-s_1|}\left(|u_2-u_1|^{2H-2}|s_2+s_1|^{2H-2}\right)ds_1ds_2du_1du_2\,.
$$

Using the L'H\^{o}spital's rule, we have
$$
\frac{dI_T}{dT}=\int_{[0,T]^3}e^{-\vartheta (T-s_2)-\vartheta|s_1-u_1|}\left((T-u_1)^{2H-2}(s_2+s_1)^{2H-2}\right)ds_1du_1du_2.
$$

Let $T-s_2=x_1, T-s_1=x_2, T-u_1=x_3$. Ignoring the sign, we have
\begin{eqnarray*}
\frac{dI_T}{dT}&=&\int_{[0,T]^3}e^{-\vartheta x_1-\vartheta |x_2-x_3|}\left(x_3^{2H-2}(T-x_1+T-x_2)^{2H-2}\right)dx_1dx_2dx_3\\
&=& e^{-\vartheta T}\int_{[0,T]^3}e^{\vartheta y_1-\vartheta|x_2-x_3|}\left(x_3^{2H-2}(y_1+T-x_2)^{2H-2}\right)dy_1dx_2dx_3\\
&\leq& e^{-\vartheta T}\int_{[0,T]^3}e^{\vartheta y_1-\vartheta|x_2-x_3|}\left(x_3^{2H-2}y_1^{2H-2}\right)dy_1dx_2dx_3.
\end{eqnarray*}

Let $ J_T=\int_{[0,T]^3}e^{\vartheta y_1-\vartheta|x_2-x_3|}\left(x_3^{2H-2}y_1^{2H-2}\right)dy_1dx_2dx_3$. Then, using the L'H\^{o}spital's rule, we get
$$
\frac{dJ_T}{dT}=e^{-\vartheta T}\int_{[0,T]^2}e^{\vartheta y_1+\vartheta x_3}(y_1^{2H-2}x_3^{2H-2})dy_1dx_3\,.
$$

On the other hand, we can easily obtain $\frac{de^{-\vartheta T}}{dT}=-\vartheta e^{-\vartheta T}$. Moreover, with the L'H\^{o}spital's rule, it is easy to check that
$$
\lim_{T\rightarrow \infty} \frac{J_T}{e^{\vartheta T}}=0\,,
$$
which implies the equation \eqref{eq: Cai}.

Consequently, with \eqref{eq: Cai} and the fact $|s_2+s_1|^{2H-2}\leq |s_2-s_1|^{2H-2}$, it is easy to see that
\begin{equation}\label{eq44}
\lim_{T\rightarrow \infty}\frac{1}{T}\int_{[0,T]^4}\exp\left(-\vartheta |u_2-s_2|-\vartheta|u_1-s_1|\right)|u_2+u_1|^{2H-2}|s_2+s_1|^{2H-2}ds_1ds_2du_1du_2=0.
\end{equation}

Combining \eqref{ef2t first}, \eqref{ef1t}, \eqref{eq: Cai} with \eqref{eq44}, we verify condition {\bf (i)}.

Now we will check the condition condition {\bf (ii)}. For $s\leq T$, we have
$$
D_sF_T=\frac{X_s}{\sqrt{T}}+\frac{1}{\sqrt{T}}\int_s^T e^{-\vartheta(t-s)}d\xi_t\,.
$$

From \eqref{fg} we have
\begin{eqnarray*}
\|D_sF_T\|_{\mathcal{H}}^2&=&\frac{1}{T}\int_0^T \left(X_s+\int_s^T e^{-\vartheta(t-s)}d\xi_t\right)^2ds+\frac{H(2H-1)}{T}\\
&&\int_0^T\int_0^T\left(X_s+\int_s^Te^{-\vartheta(t-s)} d\xi_t\right)\left(X_u+\int_u^Te^{-\vartheta(t-u)}d\xi_t\right)\left(|u-s|^{2H-2}-|u+s|^{2H-2}\right)duds
\end{eqnarray*}

We first consider the first term of the above equation. A straightforward calculation shows that
\begin{eqnarray*}
\frac{1}{T}\int_0^T \left(X_s+\int_s^T e^{-\vartheta(t-s)}d\xi_t\right)^2ds&=&\frac{1}{T}\int_0^T \left(X_s^2+2X_s\int_s^Te^{-\vartheta(t-s)}d\xi_t+\left(\int_s^T e^{-\vartheta(t-s)}d\xi_t \right)^2             \right)ds\\
&=&A_T^{(1)}+A_T^{(2)}+A_T^{(3)}.
\end{eqnarray*}

From the proof of Theorem \ref{th: conv pr}, the independent of the $W_t$ and $S_t^H$ in the msfBm, the convergence to $0$ for the standard Brownian motion case in the proof of Theorem 3.4 of \cite{HN10}, the ergodicity and stationary of the fractional O-U process (see \cite{CKM03}) and Lemma \ref{s s distance} we can easily obtain that all these three terms converges in $L^2$ as $T$ tends to infinity.

Now let us look at the third term of $\|D_sF_T\|_{\mathcal{H}}^2$. A standard calculation yields
\begin{eqnarray*}
C_T&=&\frac{H(2H-1)}{T}\int_0^T\int_0^T\left(X_s+\int_s^Te^{-\vartheta(t-s)} d\xi_t\right)\left(X_u+\int_u^Te^{-\vartheta(t-u)}d\xi_t\right)\\
&&\;\left(|u-s|^{2H-2}-|u+s|^{2H-2}\right)duds\\
&=&\frac{H(2H-1)}{T}\left(C_T^{(1)}+2C_T^{(2)}+C_T^{(3)}\right)\,,
\end{eqnarray*}
where
$$
C_T^{(1)}=\int_0^T\int_0^TX_sX_u\left(|u-s|^{2H-2}-|u+s|^{2H-2}\right)duds,
$$
$$
C_T^{(2)}=\int_0^T\int_0^T\left(X_u \int_s^Te^{-\vartheta(t-s)} d\xi_t\right) \left(|u-s|^{2H-2}-|u+s|^{2H-2}\right) duds\,,
$$
and
$$
C_T^{(3)}=\int_0^T\int_0^T\left( \int_s^Te^{-\vartheta(t-s)} d\xi_t  \int_u^Te^{-\vartheta(t-u)}d\xi_t          \right)  \left(|u-s|^{2H-2}-|u+s|^{2H-2}\right) duds.
$$

With the same method of Theorem 3.4 in \cite{HN10}, Lemma \ref{cov distant} and the independence of the $W_t$ and $S_t^H$ in msfBm, we have 
\begin{eqnarray}\label{eq48}
\lim_{T\rightarrow \infty}\frac{1}{T^2}\mathbf{E}\left(|C_T^{(i)}-\mathbf{E}C_T^{(i)}|^2\right)&=&0,\,\,\,\,i=2,3\,.
\end{eqnarray}
then we have 
\begin{equation}\label{conv 0 C T}
\lim_{T\rightarrow \infty} \mathbf{E}\left[\left(C_T-\mathbf{E}C_T\right)^2\right]=0.
\end{equation}
which implies that $\lim\limits_{T\rightarrow\infty}\mathbf{E} C_T$ exists. Finally, we obtain that $C_T$ converges in $L^2$ to a constant. Thus, condition {\bf (ii)} satisfies.

{\bf Step 2:} Case $H=3/4$. From \eqref{eq:least square} we have
$$
\frac{\sqrt{T}}{\sqrt{\log (T)}}\left(\bar{\vartheta}_T-\vartheta\right)=-\frac{\frac{F_T}{\sqrt{\log T}}}{\frac{1}{T}\int_0^TX_t^2dt}\,,
$$
where $F_T$ is defined by \eqref{eq: dou int}.

We still use the fourth moment theorem (see, for example, Theorem 4 in \cite{NO2008}) and check two conditions of {\bf Step 1}. Using same calculations of {\bf Step 1}, we can  show that
$$
\lim_{T\rightarrow \infty}\frac{1}{T\log(T)}\int_{[0,T]^4}e^{-\vartheta|s_2-u_2|-\vartheta|s_1-u_1|}|s_2-s_1|^{2H-2}(u_2+u_1)^{2H-2}du_1du_2ds_1ds_2=0
$$
and
$$
\lim_{T\rightarrow \infty}\frac{1}{T\log(T)}\int_{[0,T]^4}e^{-\vartheta|s_2-u_2|-\vartheta|s_1-u_1|}(s_2+s_1)^{2H-2}(u_2+u_1)^{2H-2}du_1du_2ds_1ds_2=0.
$$

On the other hand, a straightforward calculation shows that
$$
\lim_{T\rightarrow \infty}\mathbf{E}\left(\frac{1}{2\sqrt{T\log T}}\int_0^T\int_0^T e^{-\vartheta|t-s|}dW_tdW_s\right)^2=0\,.
$$

Then, we have
\begin{eqnarray*}
&&\lim_{T\rightarrow \infty}\mathbf{E}\left(\frac{F_T}{\sqrt{\log T}}\right)^2\\
&=&\lim_{T\rightarrow \infty}\frac{H^2(2H-1)^2}{2T\log(T)}\int_{[0,T]^4}e^{-\vartheta|s_2-u_2|-\vartheta|s_1-u_1|}|s_2-s_1|^{2H-2}(u_2-u_1)^{2H-2}du_1du_2ds_1ds_2\\
&=&\frac{9}{4\vartheta^2}\,,
\end{eqnarray*}
where the equality comes from Lemma 6.6 in \cite{HN17}.

Thus, condition {\bf (i)} and condition {\bf (ii)} are obvious when we add a term of $\frac{1}{\sqrt{\log T}}$ and $T^{8H-6}=1$ with $H=\frac{3}{4}$.

{\bf Step 3:} In this step we will prove the theorem when $3/4<H<1$. From \eqref{eq:least square}, we have
$$
T^{2-2H}\left(\bar{\vartheta}_T-\vartheta\right)=-\frac{\frac{T^{1-2H}}{2}\int_0^T\int_0^Te^{-\vartheta|t-s|}d\xi_sd\xi_t}{\frac{1}{T}\int_0^TX_t^2dt}.
$$

Let us mention that the condition {\bf (ii)} in {\bf Step 1} will not be satisfied when $H>3/4$. Fortunately, we still have the following convergence:
$$
\lim_{T\rightarrow \infty}T^{3-4H}\frac{1}{T}\int_{[0,T]^4}e^{-\vartheta|s_2-u_2|-\vartheta|s_1-u_1|}(s_2-s_1)^{2H-2}(u_2+u_1)^{2H-2}du_1du_2ds_1ds_2=0
$$
and
$$
\lim_{T\rightarrow \infty}T^{3-4H}\frac{1}{T}\int_{[0,T]^4}e^{-\vartheta|s_2-u_2|-\vartheta|s_1-u_1|}(s_2+s_1)^{2H-2}(u_2+u_1)^{2H-2}du_1du_2ds_1ds_2=0.
$$
With the similarity of the process $\xi$ and Lemma 6.6 in \cite{HN17}, we have
$$
{T^{1-2H}}\int_0^T\int_0^Te^{-\vartheta|t-s|}d\xi_sd\xi_t\xrightarrow{\mathcal{L}}2\vartheta^{-1}R_1
$$
which achieves the proof.

\vspace{1mm}

\textbf{Funding}: Chunhao Cai is supported by the Fundamental Research Funds for the SUFE No. 2020110294. Weilin Xiao is supported by the National Natural Science Foundation of China, grant No. 71871202.

\end{document}